\numberwithin{equation}{section}
\newtheorem{theorem}{Theorem}[section]
\newtheorem{lemma}{Lemma}[section]
\theoremstyle{definition}
\newtheorem{remark}{Remark}[section]
\newtheorem{proposition}{Proposition}[section]
\begin{document}

\title[Sobolev  inequalities on Riemann-Finsler manifolds]
{Sharp Morrey-Sobolev inequalities and eigenvalue problems on Riemannian-Finsler manifolds with nonnegative Ricci curvature}

\date{}

\author{Alexandru Krist\'aly \and \'Agnes Mester \and Ildik\'o I. Mezei}

\thanks{A. Krist\'aly  is supported by the UEFISCDI/CNCS grant PN-III-P4-ID-PCE2020-1001 (Romania) and Excellence Scholarship Program  \'OE-KP-2-2022 (Hungary).  
	I.I. Mezei is supported by the  UEFISCDI/CNCS grant PN-III-P4-ID-PCE2020-1001 (Romania). \'A. Mester is supported by  the National Research, Development and Innovation Fund of Hungary, financed under the K$\_$18 funding scheme, Project No.  127926 (Hungary).
}

\address{A. Krist\'aly: Institute of Applied Mathematics, \'Obuda
	University, B\'ecsi \'ut 96/B, 1034
	Budapest, Hungary \& Department of Economics, Babe\c s-Bolyai University, Str. Teodor Mihali 58-60, 400591 Cluj-Napoca,
Romania}

\email{kristaly.alexandru@nik.uni-obuda.hu; alexandru.kristaly@ubbcluj.ro}

\address{\'A. Mester: 
Department of Mathematics and Computer Sciences, Babe\c s-Bolyai University, 
Str. Mihail Kog\u alniceanu 1, 
400084 Cluj-Napoca, Romania 
\& 
Institute of Applied Mathematics, \'Obuda	University, B\'ecsi \'ut 96/B, 1034 Budapest, Hungary
}
\email{agnes.mester@ubbcluj.ro}

\address{I. Mezei: Department of Mathematics and Computer Sciences, Babe\c s-Bolyai University, Str. Mihail Kog\u alniceanu 1, 400084 Cluj-Napoca,
Romania}
\email{ildiko.mezei@ubbcluj.ro}

\begin{abstract} 
	\noindent 	Combining the sharp isoperimetric inequality established by Z. Balogh and A. Krist\'aly [\textit{Math.\ Ann.}, in press,  doi.org/10.1007/s00208-022-02380-1] with an anisotropic symmetrization argument, we establish sharp Morrey-Sobolev  inequalities on $n$-dimensional Finsler manifolds having nonnegative $n$-Ricci curvature.\
	 A byproduct of this method is  a	 Hardy-Sobolev-type inequality in the same geometric setting. 
	As applications, by using variational arguments, we guarantee the existence/multiplicity of solutions for certain eigenvalue problems and elliptic PDEs involving the Finsler-Laplace operator. Our results are also new in the Riemannian setting. 
\end{abstract}

	
	\subjclass[2000]{Primary: 58J05, 53C23; Secondary:   53C60.}
	
	\keywords{Morrey-Sobolev inequality;  Riemann-Finsler manifolds; Ricci curvature; sharp isoperimetric inequality; anisotropic symmetrization.} 
	

\maketitle

	
\vspace{-0.8cm}
\section{Introduction}

Most of elliptic PDEs are studied over Sobolev spaces which are usually embedded into certain Lebesgue spaces; this fact is  described quantitatively by Sobolev  inequalities. Within this theory, a prominent class of Sobolev inequalities is provided by those defined on curved structures. Motivated mainly by the Yamabe problem, Aubin \cite{Aubin} initiated in the early seventies the so-called \textit{AB-program}, i.e., to determine the  best constants within such Sobolev inequalities  on Riemannian manifolds. It turned out that this study deeply depends on the curvature of the ambient space, and it is still a very active area of geometric analysis. A comprehensive work in this topic is provided by Hebey \cite{Hebey} and subsequent references.

Roughly speaking, two main classes of Sobolev inequalities can be distinguished, depending on the curvature restriction of  (noncompact) Riemannian manifolds, having: (a) nonpositive sectional curvature, or (b) Ricci curvature bounded from below. 

In case (a), Sobolev inequalities similar to Euclidean ones hold on Cartan-Hadamard manifolds\footnote{Complete, simply connected Riemannian manifolds with nonpositive sectional curvature.}, having the same sharp constants   as in their Euclidean counterparts, see e.g.\ Druet,  Hebey and  Vaugon \cite{Druetetal}, Hebey \cite{Hebey} and Muratori and  Soave \cite{MS}. One of the main tools to prove such Sobolev inequalities is a Schwarz-type symmetrization 'from manifolds to Euclidean spaces', combined with sharp isoperimetric inequalities, known as the Cartan-Hadamard conjecture, which is valid in low-dimensions; see e.g.  
 Ghomi and Spruck \cite{GS}, and Kloeckner and   Kuperberg \cite{KK}. 

In case (b), the existence of a lower bound for the volume of small balls which is uniform w.r.t.\ their center  characterizes   the validity of Sobolev inequalities, see Hebey \cite[Chapter 3]{Hebey}. In the particular case when the Ricci curvature is nonnegative,  rigidity phenomena occur, i.e., a Sobolev inequality with the same Sobolev constant as in its Euclidean counterpart is supported on such  a manifold if and only if the manifold is isometric to the Euclidean space, see Ledoux \cite{Ledoux-CAG}. Quantitatively speaking, a close constant in a Sobolev inequality to its optimal Euclidean counterpart implies 'topologically closer' manifold to the Euclidean space, described by the trivialization of homotopy groups, see  do Carmo and Xia \cite{doCarmo-Xia} (and its nonsmooth version for $CD(0,N)$ spaces in Krist\'aly \cite{Kristaly-Calculus}). 

Very recently, Balogh and Krist\'aly \cite{BaloghKristaly} proved sharp $L^p$-Sobolev inequalities on $n$-dimensional Riemannian manifolds having nonnegative Ricci curvature 
and Euclidean volume growth, whenever $1<p<n$. They used symmetrization arguments and a sharp isoperimetric inequality  recently proved by Brendle \cite{Brendle}, and alternatively, by themselves \cite{BaloghKristaly}. We notice that the sharp isoperimetric inequality in \cite{BaloghKristaly} is valid even for generic $CD(0,N)$ spaces, thus in particular, for reversible Finsler manifolds with nonnegative $n$-Ricci curvature (for short, ${\sf Ric}_n\geq 0$, see \S \ref{section-2}).  

 Based on the sharp isoperimetric inequality in \cite{BaloghKristaly},  the main purpose of the present paper is to adapt a suitable Schwarz-type symmetrization argument to Finsler manifolds with ${\sf Ric}_n\geq 0$ and to establish (possibly sharp) functional inequalities. In particular,  we aim to handle 
the complementary case $p>n$ w.r.t. \cite{BaloghKristaly} by establishing sharp Morrey-Sobolev inequalities on such Finsler structures. We remark that our results are also new in the Riemannian setting.

 In order to give a flavor of our results, let $(M, F)$ be a noncompact, complete $n$-dimensional reversible Finsler manifold with ${\sf Ric}_n\geq 0$, endowed with the canonical volume form $\mathrm{d}v_F$ and the induced  Finsler metric $d_F: M \times M \to \mathbb{R}$, and let $F^*$ be the polar transform of $F$; for these notions, see \S \ref{section-2}. Let $B_x(r) = \{ z \in M: d_F(x,z)<r \}$ be the geodesic ball with center $x\in M$ and radius $r>0$. The \textit{asymptotic volume ratio of} $(M, F)$ is defined as
\begin{equation*}
\mathsf{AVR}_F = \lim_{r \to \infty} \frac{\mathrm{Vol}_F(B_x(r))}{\omega_n r^n},
\end{equation*}
where $\mathrm{Vol}_F(S) = \displaystyle\int_{S} \mathrm{d}v_F$ for any measurable set $S\subset M$, while $\omega_n = \pi^\frac{n}{2}/ \Gamma(1+\frac{n}{2})$ denotes the volume of the Euclidean open unit ball in $\mathbb{R}^n$. 
Note that $\mathsf{AVR}_F\in [0,1]$ is well defined, i.e., it is independent of the choice of $x \in M$. Also, by the generalized Bishop-Gromov volume growth inequality, see Shen \cite{Shen-volume}, we have that the mapping $r \mapsto \frac{\mathrm{Vol}_F(B_x(r))}{r^n}$ is nonincreasing on $(0,\infty)$ for every $x \in M$. We say that  $(M,F)$ has \textit{Euclidean volume growth} whenever $\mathsf{AVR}_F > 0$. 

The key result is a \textit{P\'olya-Szeg\H o inequality}  on Finsler manifolds with ${\sf Ric}_n\geq 0$, involving the asymptotic volume ratio $\mathsf{AVR}_F$ (see Theorem \ref{th:Polya-Szego}), whose  proof is based on the sharp isoperimetric inequality from \cite{BaloghKristaly} and a  symmetrization argument in the spirit of Aubin \cite{Aubin}. Although the latter symmetrization is well known,  a careful adaptation is needed 'from manifolds to normed spaces'.  
The first main consequence is  the following sharp Morrey-Sobolev inequality with \textit{support-bound}: 

\begin{theorem}\label{theorem-1-intro}
	Let $(M, F)$ be a noncompact, complete $n$-dimensional reversible Finsler manifold with ${\sf Ric}_n\geq 0$ and Euclidean volume growth $0 < \mathsf{AVR}_F \leq 1$. If $p > n \geq 2$, then  one has 
	\begin{equation}\label{Morrey-Sobolev-Finsler-intro}
		\| u\|_{L^\infty(M)} \leq{\sf T}_{F}^{\sf MS} \, {\rm Vol}_{F}({\rm supp}\, u)^{\frac{1}{n}-\frac{1}{p}}\left(\int_{M} F^*(x, D u(x))^p \mathrm{d}v_F \right)^{\frac{1}{p}}  ,\ \ \ \forall u \in C_0^\infty(M),
	\end{equation}
	where the constant 
	$$ {\sf T}_{F}^{\sf MS}=n^{-\frac{1}{p}}\omega_n^{-\frac{1}{n}}\left(\frac{p-1}{p-n}\right)^\frac{p-1}{p} {\sf AVR}_F^{-\frac{1}{n}}$$ is  sharp.
\end{theorem}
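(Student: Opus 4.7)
The plan is to reduce the manifold inequality \eqref{Morrey-Sobolev-Finsler-intro} to the classical sharp Euclidean Morrey inequality for radially decreasing functions on a ball, using the P\'olya-Szeg\H o inequality of Theorem \ref{th:Polya-Szego} as the bridge. Granted that result, one associates to every $u \in C_0^\infty(M)$ a non-negative, radially decreasing rearrangement $u^\star: \mathbb{R}^n \to [0,\infty)$ satisfying $\|u^\star\|_{L^\infty(\mathbb{R}^n)} = \|u\|_{L^\infty(M)}$, $\mathrm{Vol}(\mathrm{supp}\,u^\star) = \mathrm{Vol}_F(\mathrm{supp}\,u)$, and the gradient transfer estimate
$$\mathsf{AVR}_F^{p/n}\int_{\mathbb{R}^n}|\nabla u^\star(x)|^p\,dx \leq \int_M F^*(x,Du(x))^p\,\mathrm{d}v_F.$$
The factor $\mathsf{AVR}_F^{p/n}$ is exactly what the Balogh-Krist\'aly sharp isoperimetric inequality contributes to a P\'olya-Szeg\H o-type symmetrization from $M$ to $\mathbb{R}^n$. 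The problem thus reduces to proving the sharp Morrey inequality for $u^\star$ on $\mathbb{R}^n$, while tracking how $\mathsf{AVR}_F$ propagates into the final constant.

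For the reduced problem, I would write $u^\star(x) = \phi(|x|)$ where $\phi\geq 0$ is non-increasing, supported on $[0,R]$ with $\omega_n R^n = \mathrm{Vol}(\mathrm{supp}\, u^\star)$. The fundamental theorem of calculus gives $\|u^\star\|_\infty = \phi(0) = -\int_0^R \phi'(r)\,dr$; H\"older's inequality with exponents $p$ and $p/(p-1)$ then yields
$$\phi(0) \leq \left(\int_0^R|\phi'(r)|^p r^{n-1}\,dr\right)^{1/p}\left(\int_0^R r^{-(n-1)/(p-1)}\,dr\right)^{(p-1)/p}.$$
Since $p>n$, the second factor equals $\left(\frac{p-1}{p-n}\right)^{(p-1)/p}R^{(p-n)/p}$, and the first equals $(n\omega_n)^{-1/p}\|\nabla u^\star\|_{L^p(\mathbb{R}^n)}$. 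Substituting $R = (\omega_n^{-1}\mathrm{Vol}(\mathrm{supp}\,u^\star))^{1/n}$ and invoking the gradient transfer estimate produces \eqref{Morrey-Sobolev-Finsler-intro} with the constant ${\sf T}_F^{\sf MS}$; the $\mathsf{AVR}_F^{-1/n}$ factor surfaces precisely when $\mathsf{AVR}_F^{p/n}$ is dragged across to the Finsler side of the inequality.

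For sharpness, which I anticipate as the main technical obstacle, the plan is to construct an asymptotically extremal sequence inspired by the equality case in the H\"older step. Fix $x_0\in M$ and, for $R>0$, set
$$u_R(x) = \left(R^{(p-n)/(p-1)} - d_F(x_0,x)^{(p-n)/(p-1)}\right)_+.$$
Then $\|u_R\|_\infty = R^{(p-n)/(p-1)}$, and via the coarea formula, the eikonal identity $F^*(x,Dd_F(x_0,\cdot))=1$ (almost everywhere), and the generalized Bishop-Gromov volume comparison of Shen \cite{Shen-volume} (which gives $\mathrm{Vol}_F(B_{x_0}(r))/r^n \to \mathsf{AVR}_F\omega_n$ monotonically), one obtains
$$\mathrm{Vol}_F(\mathrm{supp}\,u_R) \sim \mathsf{AVR}_F\omega_n R^n, \qquad \int_M F^*(x,Du_R)^p\,\mathrm{d}v_F \sim \left(\frac{p-n}{p-1}\right)^{p-1} n\omega_n\mathsf{AVR}_F\, R^{(p-n)/(p-1)}$$
as $R\to\infty$. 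A direct bookkeeping of the exponents of $R$, $\omega_n$ and $\mathsf{AVR}_F$ shows that the ratio of the two sides of \eqref{Morrey-Sobolev-Finsler-intro} tends to $1$ along this sequence, confirming that ${\sf T}_F^{\sf MS}$ is optimal. The delicate point is extracting the correct leading-order behavior of the gradient integral from Bishop-Gromov; the hypothesis $\mathsf{AVR}_F>0$ is used here in an essential way, since the extremizing scale is $R\to\infty$, where the Finsler density most accurately mimics its Euclidean model weighted by $\mathsf{AVR}_F$.
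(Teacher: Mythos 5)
Your proposal is correct and follows essentially the same route as the paper: reduce to the Euclidean case via the P\'olya--Szeg\H{o} inequality \eqref{Polya-Szego} (the $\mathsf{AVR}_F^{p/n}$ factor supplying the $\mathsf{AVR}_F^{-1/n}$ in the constant), then test sharpness with the radial bump $u_R$ built from $d_F(x_0,\cdot)^{(p-n)/(p-1)}$ and the Bishop--Gromov limit $\mathrm{Vol}_F(B_{x_0}(r))/r^n\to\omega_n\mathsf{AVR}_F$. The only cosmetic differences are that you re-derive Talenti's Euclidean Morrey inequality \eqref{Talenti-2} via the one-dimensional H\"older step rather than citing it, and you normalize $u_R$ by a factor of $R^{(p-n)/(p-1)}$ relative to the paper's choice, which is immaterial since the inequality is homogeneous.
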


 A counterpart of Theorem \ref{theorem-1-intro} is the following sharp Morrey-Sobolev inequality with \textit{$L^1$-bound}: 

\begin{theorem} \label{Th_Morrey1-0}
	Let $(M, F)$ be a noncompact, complete $n$-dimensional reversible Finsler manifold with  ${\sf Ric}_n\geq 0$ and $0 < \mathsf{AVR}_F \leq 1$, and for any $p>n\geq 2$, consider the constant $\eta = \frac{np}{np+p-n}$.  Then, for every $u \in C_0^\infty(M)$ one has 
	\begin{equation}\label{Morrey_Sobolev_Finsler-0}
		\|u\|_{L^\infty(M)} \leq
		\mathsf{C}_{F}^{\sf MS} \left(\int_M |u(x)| \mathrm{d}v_F \right)^{1-\eta}
		\left(\int_M F^*(x, D u(x))^p \mathrm{d}v_F \right) ^{\frac{\eta}{p}}, 
	\end{equation}
	where the constant 
	$$
	\mathsf{C}_{F}^{\sf MS}  =  (n\omega_n^\frac{1}{n})^{-\frac{np'}{n+p'}}\left(\frac{1}{n}+\frac{1}{p'}\right)
	\left(\frac{1}{n}-\frac{1}{p}\right)^\frac{(n-1)p'-n}{n+p'}
	\left({\sf B}\left(
	\frac{1-n}{n}p'+1,p'+1\right)\right)^\frac{n}{n+p'} {\sf AVR}_F^{-\frac{\eta}{n}}
	$$
	is sharp. Hereafter, $p' = \frac{p}{p-1}$, and $\sf B(\cdot, \cdot)$ denotes the Euler beta-function.
\end{theorem}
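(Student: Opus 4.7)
The plan is to follow the same two-step strategy as for Theorem \ref{theorem-1-intro}: first reduce to a Euclidean radial problem via the P\'olya-Szeg\H o inequality of Theorem \ref{th:Polya-Szego}, and then solve the resulting one-dimensional extremal problem directly.

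For the reduction, given $u \in C_0^\infty(M)$, Theorem \ref{th:Polya-Szego} provides a radial decreasing rearrangement $u^*$ on $\mathbb{R}^n$ satisfying $\int_M |u|^q \, \mathrm{d}v_F = \mathsf{AVR}_F \int_{\mathbb{R}^n} (u^*)^q \, \mathrm{d}x$ for $q \in [1, \infty)$, together with $\|u\|_{L^\infty(M)} = \|u^*\|_{L^\infty(\mathbb{R}^n)}$ and $\int_M F^*(x, Du)^p \, \mathrm{d}v_F \geq \mathsf{AVR}_F \int_{\mathbb{R}^n} |\nabla u^*|^p \, \mathrm{d}x$. Substituting into the target inequality and collecting powers of $\mathsf{AVR}_F$ (using $-(1-\eta) - \eta/p = -\eta/n$), it reduces to the sharp Euclidean analogue
\[
\|u^*\|_{L^\infty(\mathbb{R}^n)} \leq \widetilde{\mathsf{C}} \, \|u^*\|_{L^1(\mathbb{R}^n)}^{1-\eta} \, \|\nabla u^*\|_{L^p(\mathbb{R}^n)}^\eta,
\]
with the Finsler constant related as $\mathsf{C}_F^{\sf MS} = \widetilde{\mathsf{C}} \cdot \mathsf{AVR}_F^{-\eta/n}$.

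For the extremal problem on $\mathbb{R}^n$, assume $\operatorname{supp} u^* \subset \overline{B_R}$ and set $v(s) := -u^*{}'(s) \geq 0$. By Fubini, $u^*(0) = \int_0^R v \, ds$, $\|u^*\|_{L^1(\mathbb{R}^n)} = \omega_n \int_0^R v(s) s^n \, ds$, and $\|\nabla u^*\|_{L^p(\mathbb{R}^n)}^p = n \omega_n \int_0^R v(s)^p s^{n-1} \, ds$. The Lagrangian for maximizing $u^*(0)$ under fixed $L^1$ and $L^p$-gradient norms yields the distributional Euler-Lagrange equation $\Delta_p u^* = (\lambda - \delta_0)/(\mu p)$, and integrating over $B_r$ via the divergence theorem pins down the extremal profile
\[
v(s) = \left[\frac{R^n - s^n}{n s^{n-1}}\right]^{1/(p-1)} \quad \text{on } (0, R),
\]
up to scaling. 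The substitution $t = (s/R)^n$ then expresses the three integrals above as $(R/n)^{p'} \mathsf{B}(a, p')$, $\omega_n R^{n+p'} n^{-p'} \mathsf{B}(a+1, p')$ and $\omega_n R^{n+p'} n^{-p'} \mathsf{B}(a, p'+1)$, respectively, with $a := \tfrac{1-n}{n}p'+1$ (so that $a + p' = (n+p')/n$).

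To conclude, one checks that the $R$-powers cancel in the ratio $u^*(0)/(\|u^*\|_{L^1}^{1-\eta} \|\nabla u^*\|_{L^p}^\eta)$, since $(n+p')(1-\eta/p') = p'$; the classical identities $\mathsf{B}(a+1, p') = \frac{a}{a+p'}\mathsf{B}(a, p')$ and $\mathsf{B}(a, p'+1) = \frac{p'}{a+p'}\mathsf{B}(a, p')$ then collapse the three Beta functions into scalar multiples of $\mathsf{B}(\tfrac{1-n}{n}p'+1, p'+1)$; and the arithmetic identities $\eta = np'/(n+p')$ together with $(1-\eta)(\tfrac{1}{n}+\tfrac{1}{p'}) = \tfrac{1}{n} - \tfrac{1}{p}$ reproduce $\widetilde{\mathsf{C}}$, and hence $\mathsf{C}_F^{\sf MS}$, in the form stated. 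Sharpness is immediate from the extremal $v$ above, together with a standard approximating sequence of test functions on $M$ whose rearrangements converge to the Euclidean extremal. The main technical obstacle is the Beta-function bookkeeping: reducing the three distinct Beta functions produced by the substitution to the single one appearing in the closed form, while correctly tracking all exponents of $\eta$, $n$, $p'$, $\omega_n$ and $R$.
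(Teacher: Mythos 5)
Your two-step architecture (reduce to a Euclidean problem via the P\'olya-Szeg\H o inequality, then settle the Euclidean inequality) matches the paper's, but there are errors in the reduction and a genuine gap in the sharpness part.

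\textbf{Misstated reduction relations.} You write $\int_M |u|^q\,\mathrm{d}v_F = \mathsf{AVR}_F\int_{\mathbb R^n}(u^*)^q\,\mathrm{d}x$ and $\int_M F^*(x,Du)^p\,\mathrm{d}v_F \geq \mathsf{AVR}_F\int_{\mathbb R^n}|\nabla u^*|^p\,\mathrm{d}x$. Both are wrong. Since the anisotropic rearrangement is defined via the volume-preservation identity \eqref{vol-egyenloseg}, the level sets of $u$ on $M$ and of $u^\star_{|\cdot|}$ on $\mathbb R^n$ have \emph{equal} measure by construction, so the layer-cake formula gives $\|u\|_{L^q(M)}=\|u^\star_{|\cdot|}\|_{L^q(\mathbb R^n)}$ exactly — equation \eqref{norm-preservation} — with no $\mathsf{AVR}_F$ factor. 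And the P\'olya-Szeg\H o inequality \eqref{Polya-Szego} carries the exponent $\mathsf{AVR}_F^{p/n}$, not $\mathsf{AVR}_F$. In your version the final exponent comes out as $-(1-\eta)-\eta/p=-\eta/n$, and in the correct version it comes from $-\frac{p}{n}\cdot\frac{\eta}{p}=-\eta/n$; the two errors fortuitously produce the same answer because of the identity $1-\eta+\eta/p=\eta/n$, but the intermediate reasoning is not valid.

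\textbf{The sharpness argument is a gap, not a sketch.} Claiming sharpness is ``immediate from the extremal $v$ above, together with a standard approximating sequence of test functions on $M$ whose rearrangements converge to the Euclidean extremal'' does not give an argument. The extremal $v$ lives on $\mathbb R^n$, and there is no mechanism by which generic test functions on a curved manifold with $\mathsf{AVR}_F<1$ have rearrangements approaching the Euclidean extremal while simultaneously nearly saturating the P\'olya-Szeg\H o inequality (which is strict except in the rigid case). The paper's sharpness proof instead constructs a specific family $u_R(x)=H(1)-H(d_F(x_0,x)/R)$ on $M$, computes $\|u_R\|_{L^1}$, $\int F^*(x,Du_R)^p\,\mathrm{d}v_F$, and $\|u_R\|_{L^\infty}$ using the eikonal equation \eqref{tavolsag-derivalt} and the coarea-type Lemma \ref{lemma-layer}, and sends $R\to\infty$ so that $\mathrm{Vol}_F(B_{x_0}(Rt))/(Rt)^n\to\omega_n\mathsf{AVR}_F$ appears inside the integrals via dominated convergence. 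That Bishop-Gromov limit is the mechanism producing $\mathsf{AVR}_F^{-\eta/n}$ in the optimal constant; without it you cannot show the constant is attained.

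\textbf{On the Euclidean step.} Deriving Talenti's one-dimensional extremal from scratch by a Lagrange-multiplier calculation is a legitimate alternative to citing \cite[Theorem 2.C]{Talenti}, and your beta-function bookkeeping (the profile $v$, the three integrals, the identities $\mathsf{B}(a+1,p')=\frac{a}{a+p'}\mathsf{B}(a,p')$, $\mathsf{B}(a,p'+1)=\frac{p'}{a+p'}\mathsf{B}(a,p')$, and the cancellation of $R$-powers) checks out. However, writing down the Euler--Lagrange equation and solving it only produces a stationary point; justifying that this is the global maximizer of $u^*(0)$ under fixed $L^1$- and $L^p$-gradient constraints requires an additional argument (e.g. convexity/Bliss-type). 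The paper sidesteps this by quoting Talenti's theorem directly, which already supplies the sharp constant and its extremal.
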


Theorems \ref{theorem-1-intro} and   \ref{Th_Morrey1-0} are also new in the Riemannian setting and can be viewed as  
new pieces within the aforementioned \textit{AB-program} of Aubin \cite{Aubin}. 
 Note that Krist\'aly \cite{Kristaly-Potential} proved that whenever \eqref{Morrey-Sobolev-Finsler-intro} and \eqref{Morrey_Sobolev_Finsler-0} hold with generic constants instead of	$\mathsf{C}_{F}^{\sf MS}$ and $ {\sf T}_{F}^{\sf MS}$, then there is a non-collapsing phenomenon of the metric  balls on the Riemannian manifold. In the Euclidean case (when the asymptotic volume ratio is 1), Theorems \ref{theorem-1-intro} and   \ref{Th_Morrey1-0} reduce to well known results of Talenti \cite{Talenti}. A natural question arises: are there nonzero extremal functions in \eqref{Morrey-Sobolev-Finsler-intro} and \eqref{Morrey_Sobolev_Finsler-0}? 
At this moment, we can provide such an answer within the class of Riemannian manifolds: equality occurs in \eqref{Morrey-Sobolev-Finsler-intro} and \eqref{Morrey_Sobolev_Finsler-0} for a nonzero, enough smooth function if and only if the Riemannian manifold is isometric to the Euclidean space $\mathbb R^n$, see Theorem \ref{Th_Morrey_Rigidity}.


A natural byproduct of our  arguments is the validity of Sobolev inequalities involving \textit{singular terms} within the same geometric setting as above; in fact, we can prove  the following Hardy-Sobolev-type inequality:

 \begin{theorem}\label{theorem-Hardy} Let $(M, F)$ be a noncompact, complete $n$-dimensional reversible Finsler manifold with ${\sf Ric}_n\geq 0$ and Euclidean volume growth $0 < \mathsf{AVR}_F \leq 1$. If $n>p>1$, 
 	 we have for every $x_0 \in M$ that
 	\begin{equation}\label{Hardy-intro}
 		\int_{M} F^*(x, D u(x))^p \mathrm{d}v_F \geq 
 		\mathsf{AVR}_F^\frac{p}{n} \left(\frac{n-p}{p}\right)^p \int_{M} \frac{|u(x)|^p}{d_F(x_0, x)^p} \mathrm{d}v_F, \ \ \ \forall u \in C_0^\infty(M). 
 	\end{equation}
\end{theorem}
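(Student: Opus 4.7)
The strategy is to combine the P\'olya-Szeg\H o inequality from Theorem~\ref{th:Polya-Szego} with the classical Euclidean Hardy inequality, and then transfer the weighted right-hand side back to $(M,F)$ via a Cavalieri-type argument. The single geometric input in the transfer step is the Bishop-Gromov volume bound $\mathrm{Vol}_F(B_{x_0}(s))\leq \omega_n s^n$, which holds because ${\sf Ric}_n\geq 0$.

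Fix $x_0\in M$ and $u\in C_0^\infty(M)$, and let $u^\star:\mathbb R^n\to\mathbb R$ denote the Euclidean symmetric-decreasing rearrangement of $|u|$ preserving Finsler distributions, i.e.\ $|\{u^\star>t\}|_{\mathrm{Eucl}}=\mathrm{Vol}_F(\{|u|>t\})$ for every $t>0$. Theorem~\ref{th:Polya-Szego} will yield
\begin{equation*}
\int_M F^*(x,Du(x))^p\,\mathrm dv_F\;\geq\;\mathsf{AVR}_F^{p/n}\int_{\mathbb R^n}|\nabla u^\star(y)|^p\,\mathrm dy.
\end{equation*}
Since $u^\star$ is compactly supported in $\mathbb R^n$ and $1<p<n$, the classical sharp Hardy inequality on $\mathbb R^n$ applied to $u^\star$ gives
\begin{equation*}
\int_{\mathbb R^n}|\nabla u^\star|^p\,\mathrm dy\;\geq\;\left(\frac{n-p}{p}\right)^p\int_{\mathbb R^n}\frac{|u^\star(y)|^p}{|y|^p}\,\mathrm dy.
\end{equation*}

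The remaining key step is the reverse-direction weighted comparison
\begin{equation*}
\int_M\frac{|u(x)|^p}{d_F(x_0,x)^p}\,\mathrm dv_F\;\leq\;\int_{\mathbb R^n}\frac{|u^\star(y)|^p}{|y|^p}\,\mathrm dy.
\end{equation*}
Using the layer-cake identity $r^{-p}=p\int_0^\infty s^{-p-1}\mathbf 1_{\{r<s\}}\,ds$ for $r>0$ together with Fubini's theorem, both sides rewrite as
\begin{equation*}
p\int_0^\infty s^{-p-1}\int_{B_{x_0}(s)}|u|^p\,\mathrm dv_F\,ds\quad\text{and}\quad p\int_0^\infty s^{-p-1}\int_{B^E(0,s)}|u^\star|^p\,\mathrm dy\,ds,
\end{equation*}
where $B^E(0,s)$ denotes the Euclidean ball. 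Let $(|u|^p)^*:[0,\infty)\to[0,\infty)$ denote the common one-dimensional decreasing rearrangement: by construction of $u^\star$, it coincides both with the rearrangement of $|u|^p$ w.r.t.\ $\mathrm dv_F$ and with that of $|u^\star|^p$ w.r.t.\ Lebesgue measure. The Hardy-Littlewood rearrangement inequality on $(M,\mathrm dv_F)$, followed by the Bishop-Gromov bound $\mathrm{Vol}_F(B_{x_0}(s))\leq \omega_n s^n$, then produces
\begin{equation*}
\int_{B_{x_0}(s)}|u|^p\,\mathrm dv_F\;\leq\;\int_0^{\mathrm{Vol}_F(B_{x_0}(s))}(|u|^p)^*(\tau)\,d\tau\;\leq\;\int_0^{\omega_n s^n}(|u|^p)^*(\tau)\,d\tau=\int_{B^E(0,s)}|u^\star|^p\,\mathrm dy,
\end{equation*}
and integrating over $s$ against $s^{-p-1}$ closes the comparison.

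Chaining the three estimates yields \eqref{Hardy-intro}. The main conceptual point (rather than a technical obstacle) is that the two transfer steps have to be set up so that the powers of $\mathsf{AVR}_F$ match: the P\'olya-Szeg\H o step produces exactly $\mathsf{AVR}_F^{p/n}$, while the weighted Cavalieri comparison carries no $\mathsf{AVR}_F$ loss thanks to the one-sided Bishop-Gromov bound $v(s)\leq\omega_n s^n$ (\emph{not} the reverse $v(s)\geq \mathsf{AVR}_F\omega_n s^n$). Matching the two so that no $\mathsf{AVR}_F$ factor leaks forces the precise choice of rearrangement $u^\star$ made above.
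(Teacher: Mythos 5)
Your proof is correct and follows essentially the same route as the paper: P\'olya-Szeg\H o (Theorem~\ref{th:Polya-Szego}), the classical Euclidean Hardy inequality, and a rearrangement comparison that moves the singular integral from $M$ to $\mathbb R^n$. The Cavalieri/Hardy-Littlewood/Bishop-Gromov argument you give for that last comparison is precisely the paper's proof of Proposition~\ref{Hardy_Lemma-0} specialized to the decreasing weight $f(r)=r^{-p}$, which the paper then records as inequality~\eqref{Hardy_rearrangement} and cites directly in the proof of Theorem~\ref{theorem-Hardy}.
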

\noindent As before, Theorem \ref{theorem-Hardy} is also new in the Riemannian framework; although expected, we do not know the sharpness of \eqref{Hardy-intro} unless we are in the Euclidean setting. Theorem \ref{theorem-Hardy} can be viewed as a counterpart of Hardy-Sobolev inequalities on Cartan-Hadamard manifolds, established e.g. by Berchio, Ganguly,  Grillo and Pinchover \cite{Berchio-etal}, D'Ambrosio and Dipierro \cite{olaszok},   Huang, Krist\'aly and Zhao \cite{HKZ}, Krist\'aly \cite{Kristaly-JMPA}, and Zhao \cite{Zhao}. An improved version of Theorem \ref{theorem-Hardy} for $p=2$ is stated in Theorem \ref{Th:BrezisPoincareVazquez}, which is a Brezis-Poincar\'e-V\'azquez inequality on Finsler manifolds.  

The second purpose of the paper -- which is not detailed here -- is to show the applicability of the aforementioned functional inequalities. First, by using Theorem \ref{theorem-1-intro} and a variational argument \`a la Ricceri \cite{Ricceri00},  in Theorem \ref{Appl:1} we provide a multiplicity result for an elliptic PDE involving the $p$-Finsler-Laplace operator. Second, by applying the Hardy-Sobolev inequality (Theorem \ref{theorem-Hardy}) and the  Brezis-Poincar\'e-V\'azquez inequality (Theorem \ref{Th:BrezisPoincareVazquez}), we provide a sufficient condition  to guarantee the existence of a positive solution for a sub-critical elliptic PDE  involving the 2-Finsler-Laplace operator, see Theorem \ref{Application2}. 

Finally, we emphasize the richness of those Finsler manifolds where our results can be applied. Beside Riemannian manifolds with nonnegative Ricci curvature and Euclidean volume growth (extensively studied in \cite{BaloghKristaly}),  we provide a whole class of non-Riemannian Finsler manifolds modeled over $\mathbb R^n$ with the required properties. More precisely,  we endow the space $\mathbb R^{n-1}$ ($n \geq 3$) with a complete Riemannian
metric $g$ having
nonnegative Ricci curvature and assume that the induced warped metric $\tilde g$ on $\mathbb R^{n-1}\times \mathbb R$, defined by $\tilde g_{(x,t)}(v,w)=\sqrt{g_x(v,v)+w^2},\ \  (x,t)\in \mathbb R^{n}, (v,w)\in T_{x}\mathbb R^{n-1}\times  T_t\mathbb R,$
has Euclidean volume growth $0 < \mathsf{AVR}_{\tilde g} \leq 1$.  Then the parameter-depending Finsler manifold $(\mathbb R^n,F_\varepsilon)$, with $\varepsilon>0$, defined by means of the Finsler metric 
$F_\varepsilon:T\mathbb R^{n}\rightarrow [0,\infty)$, 
\[ F_\varepsilon((x,t),(v,w))=\sqrt{g_x(v,v)+w^2 +
	\varepsilon \sqrt{g_x(v,v)^2+w^4}},\ \  (x,t)\in \mathbb R^{n}, (v,w)\in T_{x}\mathbb R^{n-1}\times  T_t\mathbb R, \]
has the properties that  ${\sf Ric}_n\geq 0$ and  $0 < \mathsf{AVR}_{F_\varepsilon} \leq 1$; for details, see \S \ref{subsection-example}.

The paper is organized as follows. In Section \ref{section-2} we recall those auxiliary notions that are used throughout the whole paper. Section \ref{section-symmetrization} is devoted to the anisotropic symmetrization, by proving among others, a P\'olya-Szeg\H o inequality involving the asymptotic volume ratio $\mathsf{AVR}_F$ together with a Hardy-Littlewood-P\'olya inequality. In Section \ref{sec:Sobolev_inequalities} we prove Morrey-Sobolev and Hardy-Sobolev inequalities on Finsler manifolds with ${\sf Ric}_n\geq 0$ and $\mathsf{AVR}_F\in (0,1]$,  and discuss the geometric properties of the non-Riemannian Finsler manifolds $(\mathbb R^n,F_\varepsilon)$ introduced in the previous paragraph. Applications of Sobolev inequalities  are presented in Section \ref{section-PDE} for two elliptic PDEs, both involving the Finsler-Laplace operator.

\section{Preliminaries on Finsler geometry} \label{section-2}

Let $M$ be a connected $n$-dimensional smooth manifold and $TM=\bigcup_{x \in M}T_{x} M $ its tangent bundle. 
The pair $(M,F)$ is called a Finsler manifold if
$F: TM \to [0,\infty)$ is a continuous function such that
\begin{enumerate}[(i)]
	\item $F$ is $C^{\infty}$ on $TM\setminus\{ 0 \}$; 
	\item $F(x,\lambda y) = \lambda F(x,y)$ for all $\lambda \geq 0$ and all $(x,y)\in TM$;
	\item the $n \times n$ Hessian matrix $\Big( g_{ij}(x,y) \Big) := \left( \left[ \frac{1}{2}F^{2}(x,y)\right] _{y^{i}y^{j}} \right)$
	is positive definite for all $(x,y)\in TM\setminus \{0\}.$
\end{enumerate}
If, in addition, $F(x,\lambda y) = |\lambda| F(x,y)$ holds for every $\lambda \in \mathbb{R}$ and  $(x,y) \in TM$, then the Finsler manifold is called reversible.

A curve $\gamma: [0,r] \to M$ is called a geodesic if its velocity field $\dot \gamma$ is parallel along the curve, i.e., $D_{\dot \gamma} \dot \gamma = 0$, where $D$ denotes the covariant derivative induced by the Chern connection, see Bao, Chern and Shen \cite[Chapter~2]{BCS}.   
$(M,F)$ is said to be complete if every geodesic $\gamma: [0,r] \to M$ can be extended to a geodesic defined on $\mathbb R$.

The distance function $d_F: M \times M \to [0, \infty)$ is defined by
$$ d_F(x_1, x_2) = \inf_{\gamma \in \Lambda(x_1,x_2)}  \int_0^r F( \gamma(t), \dot \gamma(t))  {\mathrm d}t,$$
where $\Lambda(x_1,x_2)$ denotes the set
of all piecewise $C^{\infty}$ curves $\gamma:[0,r] \to M$ such that
$\gamma(0)=x_1$ and $\gamma(r)=x_2$. 
Clearly, $d_{F}(x_1,x_2) =0$ if and only if $x_1=x_2$, and $d_F$ verifies the triangle inequality, as well.
However, $d_F$ is symmetric if and only if $(M,F)$ is a reversible Finsler manifold.

Let
$ B_x(1) = \Big\{ (y^i) \in \mathbb{R}^n :~ F \Big(x, \sum_{i=1}^n y^i \frac{\partial}{\partial x^i} \Big) < 1 \Big\} \subset \mathbb{R}^n ,$
and define the ratio
$$ \sigma_F(x) = \frac{ \omega_n}{\mathrm{Vol}(B_x(1))} ,$$
where $\mathrm{Vol}$ denotes in the sequel the canonical Euclidean volume, and $\omega_n = \pi^\frac{n}{2}/ \Gamma(1+\frac{n}{2})$ is the volume of the  $n$-dimensional Euclidean open unit ball.
The Busemann-Hausdorff volume form is defined as  
\begin{equation} \label{Hausdorff_measure}
	{\mathrm d}v_F(x) = \sigma_F(x) d x^1 \land \dots \land d x^n ,
\end{equation}
see Shen \cite[Section~2.2]{Shen01}. 

For a fixed point $x \in M$ let $y,v \in T_x M$ be two linearly independent tangent vectors.
Then the flag curvature is defined as
$$ \mathrm{K}^y(y, v) = \frac{g_y(\mathrm{R}(y,v)v,y)}{ g_y(y,y) g_y(v,v) - g_y(y,v)^2} ,$$ 
where $g$ is the fundamental tensor induced by the Hessian matrices $(g_{ij})$, and $\mathrm{R}$ is the Chern curvature tensor, see Bao, Chern and Shen \cite[Chapter~3]{BCS}. 
The Ricci curvature is defined as
$$ \mathsf{Ric}(y) = F^2(x,y) \sum_{i=1}^{n-1} \mathrm{K}^y(y, e_i),$$
where $\{ e_1, ... ,e_{n-1}, \frac{y}{F(x,y)} \}$ is an orthonormal basis of $T_xM$ with respect to $g_y$.

Let
$\{e_i\}_{i=1,...,n}$ be a basis for $T_xM$. The  mean distortion $\mu:TM\setminus
\{0\}\to (0,\infty)$ is defined by $\mu(x,y)=\frac{{\rm
Vol}(B_x(1))}{\omega_n}\sqrt{{\rm det}(g_{ij}(x,y))}$. The mean
covariation $\mathcal H:TM\setminus\{0\}\to \mathbb R$ is defined by
$\mathcal H(x,y)=\frac{d}{dt}(\ln \mu(\gamma_{(x,y)}(t),\dot\gamma_{(x,y)}(t)))|_{t=0},$ where
$\gamma_{(x,y)}$ is the geodesic such that $\gamma_{(x,y)}(0)=x$ and $\dot
\gamma_{(x,y)}(0)=y.$ We say that $(M,F)$ has nonnegative $n$-Ricci curvature, denoted by ${\sf Ric}_n\geq 0$, if $\mathsf{Ric}\geq 0$ and  the mean
covariation  $\mathcal H$ is identically zero. Note that  Finsler manifolds of Berwald type (i.e., the coefficients of the Chern connection $\Gamma_{ij}^k(x,y)$ do not depend on $y\in T_xM$) endowed with the Busemann-Hausdorff measure have vanishing mean covariation, see Shen \cite{Shen-volume}; this class contains both Riemannian manifolds and Minkowski spaces.
	

Endowed with the canonical volume form $\mathrm{d}v_F$, the $n$-di\-men\-sional Finsler manifold $(M,F)$ verifies  for every $x\in M$ that
\begin{equation}\label{volume-comp-nullaban}
	\lim_{r\to 0^+}\frac{{\rm Vol}_F(B_x(r))}{\omega_n r^n}=1.
\end{equation}
In addition, if $(M,F)$ is a complete  Finsler manifold with ${\sf Ric}_n\geq 0$, the Bishop-Gromov  volume comparison principle states that 
$r\mapsto \frac{{\rm Vol}_F(B_x(r))}{r^n}$ is a nonincreasing function on $(0,\infty)$, see Shen \cite[Theorem 1.1]{Shen-volume}. In particular, from the latter property and 
\eqref{volume-comp-nullaban}, it turns out that $\mathsf{AVR}_F\in [0,1]$.

The polar transform $F^*: T^*M \to [0, \infty)$ is defined as the dual metric of $F$, i.e.,
\begin{equation*}  
F^*(x,\alpha) = \sup_{y \in T_xM \setminus \{0\}} ~ \frac{\alpha(y)}{F(x,y)}.
\end{equation*}

Let $u: M \to \mathbb{R}$ be a differentiable function in the distributional sense. The gradient of $u$ is defined as
$
{\nabla}_F u(x) = J^*(x, Du(x)),
$
where $Du(x) \in T_x^*M$ denotes the (distributional) derivative of $u$ at $x\in M$ and $J^*$ is the Legendre transform given by 
$$J^*(x,\alpha) = \sum_{i=1}^n \frac{\partial}{\partial \alpha_i} \left(\frac{1}{2} F^{*2}(x,\alpha)\right) \frac{\partial}{\partial x^i}.$$  
In local coordinates, we have
\begin{equation*}  \label{derivalt-local}
Du(x)=\sum_{i=1}^n \frac{\partial u}{\partial x^i}(x)\mathrm{d}x^i \quad \text{and} \quad
{\nabla}_F u(x)=\sum_{i,j=1}^n g_{ij}^*(x,Du(x))\frac{\partial u}{\partial x^i}(x)\frac{\partial}{\partial x^j},
\end{equation*}
where $(g^*_{ij})$ is the Hessian matrix 
$\Big( g^*_{ij}(x,\alpha) \Big) = \left( \left[ \frac{1}{2}F^{*2}(x, \alpha)\right] _{\alpha^{i}\alpha^{j}} \right)$, see  Ohta and Sturm \cite[Lemma 1.1]{Ohta-Sturm}. 
Therefore, the gradient operator ${\nabla}_F$ is usually nonlinear. 

We recall the eikonal equation, i.e., if $x_0\in M$ is fixed, then by Ohta and Sturm \cite{Ohta-Sturm}, one has 
\begin{equation}  \label{tavolsag-derivalt}
F^*(x,D d_F(x_0,x))=F(x,{\nabla}_F d_F(x_0,x))=D d_F(x_0,x)(%
{\nabla}_F d_F(x_0,x))=1\ \mathrm{for\ a.e.}\ x\in M.
\end{equation}

If $X$ is a vector field on $M$, then, in local coordinates, the divergence of $X$ is defined as div$(X)=\frac{1}{\sigma_F}\frac{\partial}{\partial x^i}(\sigma_F X^i).$ 
The  $p$-Finsler-Laplace operator is given by $${\Delta}_{F,p}u(x) = \mathrm{div}\big({F^*}(x, Du(x))^{p-2} \cdot {\nabla}_F u(x)\big),$$
while the divergence theorem reads as
\begin{equation}  \label{Green}
\int_M v(x) {\Delta}_{F,p} u(x) \,{\mathrm d}v_F = - \int_M
{F^*}(x,Du(x))^{p-2} \cdot Dv(x)\big({\nabla}_F u(x)\big)\,{\text d}v_F, \quad \forall v \in C_0^\infty(M),
\end{equation}
see Ohta and Sturm \cite{Ohta-Sturm}. Note that in general, ${\Delta}_{F,p}$ is nonlinear. When $p=2$, the $2$-Finsler-Laplace operator is simply denoted by ${\Delta}_{F} \coloneqq {\Delta}_{F,2}$.   

Finally, let $\Omega$ be an open subset of $M$. 
The Sobolev space  on $\Omega$ associated with the Finsler structure $F$ is defined by  
$$W^{1,p}_F(\Omega)=\left\{u\in W^{1,p}_\mathrm{loc}(\Omega): \int_\Omega {F^*}(x,Du(x))^p ~\mathrm{d}v_F < +\infty \right\},$$
while $W_{0,F}^{1,p}(\Omega)$ is the closure of $C_{0}^{\infty
}(\Omega)$ with respect to the norm
\begin{equation*}
\|u\|_{W^{1,p}_F(\Omega)} = \left(\int_\Omega |u(x)|^p\,\mathrm{d}v_F + \int_\Omega {F^*}(x,Du(x))^p\,\mathrm{d}v_F \right)^{\frac{1}{p}}.
\end{equation*}

\section{Anisotropic symmetrization on Finsler manifolds with $\mathsf{Ric}_n \geq 0$}\label{section-symmetrization}

In what follows, let $(M, F)$ be a noncompact, complete $n$-dimensional reversible Finsler manifold with ${\sf Ric}_n\geq 0$, endowed with the canonical volume form $\mathrm{d}v_F$ and the induced  Finsler metric $d_F: M \times M \to \mathbb{R}$. In particular, $(M,d_F,{\rm d}v_F)$ is a metric measure space satisfying the $CD(0,n)$ condition, see Ohta \cite{Ohta}. Consequently, by Balogh and Kristály \cite[Theorem 1.1]{BaloghKristaly}, for every bounded open set $\Omega \subset M$ with smooth boundary, we have the  sharp isoperimetric inequality
\begin{equation}\label{isoperimetric}
	\mathcal{P}_F(\partial \Omega) \geq n \omega_n^{\frac{1}{n}} \mathsf{AVR}_F^{\frac{1}{n}}\, \text{Vol}_F(\Omega)^{\frac{n-1}{n}}, 
\end{equation}
where $\mathcal{P}_F(\partial \Omega)$ stands for the anisotropic perimeter of $\partial \Omega$, defined as $\mathcal{P}_F(\partial \Omega) = \displaystyle\int_{\partial \Omega} \mathrm{d} \sigma_F$, where $\mathrm{d} \sigma_F$ stands for  the $(n-1)$-dimensional Lebesgue measure induced by $\mathrm{d}v_F$.

Beside the Riemannian setting, see Brendle \cite{Brendle} and Balogh and Krist{\'a}ly \cite{BaloghKristaly}, one can cha\-ra\-cterize the equality in \eqref{isoperimetric} in the case of the  simplest non-Riemannian Finsler structures, namely on Minkowski spaces. To be precise, let $(\mathbb{R}^n, H)$ be a Finsler manifold endowed with the Lebesgue measure $\mathrm{d}v_H$, such that $H: \mathbb{R}^n \to [0, \infty)$ is a smooth, absolutely homogeneous norm. A Wulff-shape associated to the norm $H$ is the set
\begin{equation} \label{Wulff-shape}
	W_H(R) \coloneqq \{ x \in \mathbb{R}^n : H(x) < R \},
\end{equation} 
for any number $R > 0$. In the sequel, we assume, without loss of generality, that the set $W_H(1)$
has measure $\mathrm{Vol}(W_H(1)) = \omega_n$; in this case, we say that $H:\mathbb R^n\to [0, \infty)$ is a normalized Minkowski norm. 
Accordingly, it turns out that $\sigma_H = 1$, i.e., $\mathrm{Vol}_H(W_H(1)) = \mathrm{Vol}(W_H(1)) = \omega_n$, which yields both ${\mathrm d}v_H(x)={\mathrm d}x$ in \eqref{Hausdorff_measure} and  $\mathsf{AVR}_H = 1$. 
Then, due to  Cabr\'e, Ros-Oton, and Serra \cite[Theorem 1.2]{CabreRosOton},  we have 
\begin{equation}\label{isoperimetric_equality}
	\mathcal{P}_H(\partial \Omega) = n \omega_n^{\frac{1}{n}}\, \text{Vol}_H(\Omega)^{\frac{n-1}{n}}, 
\end{equation}
if and only if $\Omega$ has a Wulff-shape, i.e., $\Omega = W_H(R)$ for some $R>0$ (up to translations).

Let $\mathscr{C}(M)$ be the space of continuous functions $u: M \to [0,\infty)$ with compact support $S \subset M$, where $S$ is  smooth enough and $u$ is of class
$C^2$  having only non-degenerate critical points in the interior of $S$.
Based on classical Morse theory and density arguments (see Aubin \cite{Aubin}), it is enough to consider test functions $u \in \mathscr{C}(M)$ in order to handle generic Sobolev inequalities.


The anisotropic rearrangement of a bounded set $\Omega \subset M$ w.r.t.\ the normalized Minkowski norm $H$ is a Wulff-shape 
\begin{equation} \label{rearrangement_of_a_set}
\Omega_H^\star \coloneqq W_H(R),
\end{equation}
where $R>0$ is chosen such that $\mathrm{Vol}_F(\Omega) = \mathrm{Vol}_H(\Omega_H^\star) = \mathrm{Vol}(\Omega_H^\star)$.

Similarly to Druet, Hebey and Vaugon \cite{Druetetal}, and Alvino, Ferone, Trombetti and Lions \cite{AIHP-Lions}, for every function $u: M \to [0,\infty)$ belonging to  $\mathscr{C}(M)$, one can associate its anisotropic rearrangement $u_H^\star: \mathbb{R}^n \to [0,\infty)$, defined as
\begin{equation}\label{rearrangement_def}
u_H^\star(x) \coloneqq v(\omega_n H(x)^n), \quad \forall x \in \mathbb{R}^n
\end{equation}
for some nonincreasing function $v:[0, \infty) \to [0, \infty)$, such that for every $t \geq 0$, one has
\begin{equation}\label{vol-egyenloseg}
\mathrm{Vol}_F(\{x \in M: u(x) > t\}) = \mathrm{Vol}_H(\{x \in \mathbb{R}^n: u_H^\star(x) > t\}).
\end{equation}
Note that by definition, we have 
\begin{equation}\label{volume-preservation}
\mathrm{Vol}_F({\rm supp}(u)) = \mathrm{Vol}_H({\rm supp}(u_H^\star)),
\end{equation}
while by the layer cake representation (see Lieb and Loss \cite[Theorem 1.13]{LL}), it follows that
\begin{equation}\label{norm-preservation}
\|u\|_{L^q(M)}=\|u_H^\star\|_{L^q(\mathbb{R}^n)},\ \ \ \forall q\in (0,\infty].
\end{equation}
Furthermore, by using the isoperimetric inequality \eqref{isoperimetric}, we can prove the following P\'olya-Szeg\H{o}-type result, which represents a crucial ingredient in our arguments: 

\begin{theorem}\label{th:Polya-Szego}
Let $(M, F)$ be a noncompact, complete $n$-dimensional reversible Finsler manifold with $\mathsf{Ric}_n \geq 0$ and $0 < \mathsf{AVR}_F \leq 1$, and let $H:\mathbb R^n\to [0, \infty)$ be a normalized Minkowski norm.  Then, for every $u \in \mathscr{C}(M)$ and $p>1$, we have 	
\begin{equation}\label{Polya-Szego}
\int_M F^*(x,D u(x))^p \mathrm{d}v_F  \geq \mathsf{AVR}_F^\frac{p}{n}
\int_{\mathbb{R}^n} H^*(D u_H^\star(x))^p  \mathrm{d}v_H.
\end{equation}
\end{theorem}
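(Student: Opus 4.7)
The plan is to mimic the classical P\'olya--Szeg\H o rearrangement scheme, with two twists: on the manifold side, an application of the sharp isoperimetric inequality \eqref{isoperimetric} generates the factor $\mathsf{AVR}_F^{1/n}$ at every level set of $u$, while on the normed side, Wulff shapes saturate the anisotropic isoperimetric equality \eqref{isoperimetric_equality} and $H^*(Du_H^\star)$ turns out to be constant on level sets, which makes every inequality we used on $(M,F)$ into an equality on $(\mathbb R^n,H)$. The integer-power accumulation of these level-wise losses is precisely $\mathsf{AVR}_F^{p/n}$.

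Fix $u \in \mathscr{C}(M)$ and set $\mu(t) = \mathrm{Vol}_F(\{u>t\})$, $\Omega_t = \{u>t\}$, $\Gamma_t = \{u=t\}$. Since $u$ is $C^2$ on its compact support with only non-degenerate critical points, $\mu$ is absolutely continuous and the Finsler coarea formula yields
\begin{equation*}
\int_M F^*(x,Du)^p \, \mathrm{d}v_F = \int_0^{\max u}\!\int_{\Gamma_t} F^*(x,Du)^{p-1}\, \mathrm{d}\sigma_F\, \mathrm{d}t, \qquad -\mu'(t) = \int_{\Gamma_t} \frac{\mathrm{d}\sigma_F}{F^*(x,Du)}.
\end{equation*}
H\"older's inequality applied to the splitting $1 = F^*(x,Du)^{(p-1)/p}\cdot F^*(x,Du)^{-(p-1)/p}$ on $\Gamma_t$ gives
\begin{equation*}
\mathcal{P}_F(\partial\Omega_t)^p \leq \Bigl(\int_{\Gamma_t} F^*(x,Du)^{p-1}\, \mathrm{d}\sigma_F\Bigr)\bigl(-\mu'(t)\bigr)^{p-1},
\end{equation*}
which together with the isoperimetric inequality \eqref{isoperimetric} produces the one-variable lower bound
\begin{equation*}
\int_M F^*(x,Du)^p\, \mathrm{d}v_F \ \geq\ \mathsf{AVR}_F^{p/n}\,(n\omega_n^{1/n})^p \int_0^{\max u} \frac{\mu(t)^{p(n-1)/n}}{(-\mu'(t))^{p-1}}\, \mathrm{d}t.
\end{equation*}

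For $u_H^\star$ on $(\mathbb R^n, H)$, the super-level sets $\{u_H^\star > t\}$ are Wulff shapes of Euclidean volume $\mu(t)$ by \eqref{rearrangement_def} and \eqref{vol-egyenloseg}, so \eqref{isoperimetric_equality} holds as an equality, giving $\mathcal{P}_H(\partial\{u_H^\star > t\}) = n\omega_n^{1/n}\mu(t)^{(n-1)/n}$. The chain-rule computation $Du_H^\star(x) = n\omega_n H(x)^{n-1} v'(\omega_n H(x)^n)\, DH(x)$, combined with the standard dual-norm identity $H^*(DH(\cdot)) \equiv 1$, shows that $H^*(Du_H^\star)$ depends only on $H(x)$ and is therefore constant on every level set of $u_H^\star$. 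Consequently, the H\"older step above is an equality for $u_H^\star$, and the same coarea computation carried out on $(\mathbb R^n, H)$ delivers
\begin{equation*}
\int_{\mathbb R^n} H^*(Du_H^\star)^p\, \mathrm{d}v_H \ =\ (n\omega_n^{1/n})^p \int_0^{\max u} \frac{\mu(t)^{p(n-1)/n}}{(-\mu'(t))^{p-1}}\, \mathrm{d}t,
\end{equation*}
where the distribution function is the \emph{same} $\mu(t)$ as on the manifold side thanks to \eqref{vol-egyenloseg}. Comparing the last two displays yields \eqref{Polya-Szego}.

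The main obstacle is not the chain of inequalities itself but the underlying regularity bookkeeping: one must verify that $\mu$ is absolutely continuous with the coarea-derived expression for $\mu'$, that the Finsler coarea formula with $F^*(x,Du)$ in place of $|\nabla u|$ applies on smooth level sets away from the critical points of $u$ (this is precisely what the class $\mathscr{C}(M)$ buys us through Morse theory), and that the Wulff-radial structure of $u_H^\star$ inherited from the monotone profile $v$ is regular enough for $H^*(Du_H^\star)$ to be well-defined a.e.\ and constant on level sets. These are routine but delicate technical adaptations of classical Schwarz-symmetrization arguments; porting them to the anisotropic Finsler--Minkowski setting requires care, particularly because both the gradient operator $\nabla_F$ on $(M,F)$ and its Euclidean anisotropic counterpart associated with $H$ are nonlinear.
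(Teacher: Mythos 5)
Your proposal is correct and follows essentially the same route as the paper's proof: level-set comparison via the Finsler co-area formula, H\"older's inequality on each level set, the sharp isoperimetric inequality \eqref{isoperimetric} to produce the $\mathsf{AVR}_F^{1/n}$ factor, and the observation that Wulff shapes and the constancy of $H^*(Du_H^\star)$ on level sets turn the corresponding Euclidean steps into equalities. The only (harmless) cosmetic difference is that you route both sides through the common one-variable functional $(n\omega_n^{1/n})^p\int \mu(t)^{p(n-1)/n}(-\mu'(t))^{1-p}\,\mathrm{d}t$, whereas the paper compares $\int_{\Gamma_t}F^*(x,Du)^{p-1}\mathrm{d}\sigma_F$ directly with $\int_{\Gamma^\star_t}H^*(Du_H^\star)^{p-1}\mathrm{d}\sigma_H$ using the shared $\mathcal{V}'(t)$; the algebra is identical.
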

\begin{proof}
We adapt the arguments of Hebey \cite{Hebey} to the anisotropic setting. 
Let $0 < t < \max_M u$ be arbitrarily fixed, and consider the sets
$$\Omega_t = \{x \in M: u(x) > t\} \subset M ~ \text{ and } ~ \Omega^\star_t = \{x \in \mathbb{R}^n: u_H^\star(x) > t\} \subset \mathbb{R}^n,$$
and the level sets
$$\Gamma_t = u^{-1}(t) ~ \text{ and } ~ \Gamma^\star_t = (u_H^\star)^{-1}(t),$$
which turn out to be the boundaries of $\Omega_t$ and $\Omega^\star_t$, respectively.

By definition of $u_H^\star$, it follows that the set $\Omega^\star_t$ has a Wulff-shape such that
\begin{equation}\label{volumes}
\text{Vol}_F(\Omega_t) = \text{Vol}_H(\Omega^\star_t) \eqqcolon  \mathcal{V}(t) .
\end{equation}
 Then, the isoperimetric inequality \eqref{isoperimetric} and the equality \eqref{isoperimetric_equality} in case of  Wulff-shapes on a Minkowski space implies that
\begin{align}\label{isoperimetic_equality}
\mathcal{P}_F(\Gamma_t) & \geq n \omega_n^{\frac{1}{n}} \mathsf{AVR}_F^{\frac{1}{n}} \text{Vol}_F(\Omega_t)^{\frac{n-1}{n}} \nonumber  = n \omega_n^{\frac{1}{n}} \mathsf{AVR}_F^{\frac{1}{n}} \text{Vol}_H(\Omega^\star_t)^{\frac{n-1}{n}}  \nonumber \\
& = \mathsf{AVR}_F^{\frac{1}{n}} \cdot \mathcal{P}_H(\Gamma^\star_t).
\end{align} 
By using relation \eqref{volumes} and the co-area formula proved by Shen \cite[Theorem 3.3.1, p.\ 46]{Shen01}, it follows that
\begin{equation*}
\mathcal{V}(t) = \int_t^\infty \left( \int_{\Gamma_s} \frac{1}{F^*(x, D u(x))} \mathrm{d} \sigma_F \right) \mathrm{d} s =  \int_t^\infty \left( \int_{\Gamma^\star_s} \frac{1}{H^*(D u_H^\star(x))} \mathrm{d} \sigma_H \right) \mathrm{d} s,
\end{equation*} 	
where $\mathrm{d} \sigma_F$ and $\mathrm{d} \sigma_H$ denote the $(n-1)$-dimensional Hausdorff measures induced by $\mathrm{d}v_F$ and $\mathrm{d}v_H$, respectively. 
It follows that
\begin{equation}\label{V1}
\mathcal{V}'(t) = - \int_{\Gamma_t} \frac{1}{F^*(x,D u(x))} \mathrm{d} \sigma_F  = - \int_{\Gamma^\star_t} \frac{1}{H^*(D u_H^\star(x))} \mathrm{d} \sigma_H.
\end{equation} 

On the one hand, since $u_H^\star$ is anisotropically symmetric in $H(x)$, the quantity $H^*(D u_H^\star)$ is constant on $\Gamma^\star_t$, thus
\begin{equation}\label{V2}
\mathcal{V}'(t) = - \frac{\mathcal{P}_H(\Gamma^\star_t)}{H^*(D u_H^\star(x))}  \quad \text{for every}  ~ x \in \Gamma^\star_t.
\end{equation}

On the other hand, applying Hölder's inequality and using relation \eqref{V1}, for every $p>1$, we have
\begin{align*}
\mathcal{P}_F(\Gamma_t) & = \int_{\Gamma_t} \mathrm{d} \sigma_F = \int_{\Gamma_t} \frac{1}{F^*(x, D u(x))^\frac{p-1}{p}} F^*(x, D u(x))^\frac{p-1}{p} \mathrm{d} \sigma_F \\
& \leq \left(-\mathcal{V}'(t) \right)^\frac{p-1}{p} \left( \int_{\Gamma_t} F^*(x, D u(x))^{p-1} \mathrm{d} \sigma_F \right)^\frac{1}{p}.
\end{align*}
Accordingly, relations \eqref{isoperimetic_equality} and \eqref{V2} yield that
\begin{align*}
\int_{\Gamma_t} F^*(x, D u(x))^{p-1} \mathrm{d} \sigma_F & \geq 
\mathcal{P}_F(\Gamma_t)^p \left(-\mathcal{V}'(t) \right)^{1-p} \geq \mathsf{AVR}_F^{\frac{p}{n}} \cdot 
\mathcal{P}_H(\Gamma^\star_t)^p \left( \frac{\mathcal{P}_H(\Gamma^\star_t)}{H^*(D u_H^\star(x))} \right)^{1-p} \\
&= \mathsf{AVR}_F^{\frac{p}{n}} 
\int_{\Gamma^\star_t} H^*(D u_H^\star(x))^{p-1} \mathrm{d} \sigma_H. 
\end{align*} 
Applying the co-area formula once again, we obtain
\begin{align*}
\int_M F^*(x, D u(x))^p \mathrm{d}v_F & = \int_0^\infty \left( \int_{\Gamma_t} F^*(x, D u(x))^{p-1} \mathrm{d} \sigma_F \right) \mathrm{d}t \\ 
& \geq \mathsf{AVR}_F^{\frac{p}{n}}  
\int_0^\infty \left( \int_{\Gamma^\star_t} H^*(D u_H^\star(x))^{p-1} \mathrm{d} \sigma_H \right) \mathrm{d}t   \\ 
& = \mathsf{AVR}_F^{\frac{p}{n}} 
\int_{\mathbb{R}^n} H^*(D u_H^\star(x))^p \mathrm{d}v_H,
\end{align*} 
which concludes the proof.
\end{proof}

A Hardy-Littlewood-P\'olya-type argument (see Lieb and Loss \cite[Theorem 3.4]{LL}) combined with a careful application of the Bishop-Gromov comparison principle yields the following rearrangement inequality:

\begin{proposition} \label{Hardy_Lemma-0} {\it 
	Let $(M, F)$ be a  complete $n$-dimensional reversible Finsler manifold with $\mathsf{Ric}_n \geq 0$, $x_0 \in M$ be any fixed point and  $H:\mathbb R^n\to [0, \infty)$ be a normalized Minkowski norm. Let $p>1$. Then for every decreasing function $f: [0, \infty) \to [0, \infty)$, one has 
	\begin{equation} \label{rearrangement_ineq}
	\int_M u(x)^p f(d_F(x_0,x)) {\rm d}v_F \leq \int_{\mathbb{R}^n} {u_H^\star}(x)^p f(H(x)) {\rm d}v_H, \ \ \ \forall u \in \mathscr{C}(M).
	\end{equation}}
\end{proposition}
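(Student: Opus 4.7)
The plan is to adapt the classical Hardy-Littlewood-Pólya rearrangement inequality via a layer-cake decomposition, and then control the manifold-side measures by the Euclidean-side measures using only the Bishop-Gromov volume monotonicity. The two assets we have are: (i) the super-level sets of $f(d_F(x_0,\cdot))$ are geodesic balls $B_{x_0}(r)$ on $M$, while the super-level sets of $f(H(\cdot))$ are Wulff shapes $W_H(r)$ on $\mathbb{R}^n$; (ii) by the equimeasurability \eqref{vol-egyenloseg}, the super-level sets $\{u>\tau\}$ and $\{u_H^\star>\tau\}$ have the same measure, and the latter is a Wulff shape.

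First, I would apply the layer-cake formula of Lieb-Loss to rewrite both integrals as
\[
\int_M u(x)^p f(d_F(x_0,x))\,\mathrm{d}v_F=\int_0^\infty\!\!\int_0^\infty \mathrm{Vol}_F\bigl(\{u^p>s\}\cap B_{x_0}(r_t)\bigr)\,\mathrm{d}s\,\mathrm{d}t,
\]
\[
\int_{\mathbb{R}^n} u_H^\star(x)^p f(H(x))\,\mathrm{d}v_H=\int_0^\infty\!\!\int_0^\infty \mathrm{Vol}_H\bigl(\{(u_H^\star)^p>s\}\cap W_H(r_t)\bigr)\,\mathrm{d}s\,\mathrm{d}t,
\]
where $r_t>0$ is determined by $\{f>t\}=[0,r_t)$ (using that $f$ is decreasing, so we may assume its super-level sets are half-intervals; the general case follows by approximation). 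Thus it suffices to prove, for each pair $(s,t)$, the pointwise bound
\[
\mathrm{Vol}_F\bigl(\{u^p>s\}\cap B_{x_0}(r_t)\bigr)\le \mathrm{Vol}_H\bigl(\{(u_H^\star)^p>s\}\cap W_H(r_t)\bigr).
\]

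The key step is then just volume bookkeeping. Let $\rho_s>0$ be such that $\{(u_H^\star)^p>s\}=W_H(\rho_s)$; by \eqref{vol-egyenloseg} we have $\mathrm{Vol}_F(\{u^p>s\})=\mathrm{Vol}_H(W_H(\rho_s))=\omega_n\rho_s^n$. On the Euclidean side, two Wulff shapes are nested, so
\[
\mathrm{Vol}_H\bigl(W_H(\rho_s)\cap W_H(r_t)\bigr)=\omega_n\min(\rho_s,r_t)^n.
\]
On the manifold side, the trivial bound $\mathrm{Vol}_F(A\cap B)\le \min(\mathrm{Vol}_F(A),\mathrm{Vol}_F(B))$ combined with Bishop-Gromov---which, together with the limit \eqref{volume-comp-nullaban}, yields $\mathrm{Vol}_F(B_{x_0}(r))\le \omega_n r^n$ for every $r>0$---gives
\[
\mathrm{Vol}_F\bigl(\{u^p>s\}\cap B_{x_0}(r_t)\bigr)\le \min\bigl(\omega_n\rho_s^n,\,\omega_n r_t^n\bigr)=\omega_n\min(\rho_s,r_t)^n,
\]
which matches the Euclidean quantity exactly. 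Integrating in $s$ and $t$ closes the argument.

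The main obstacle I anticipate is a minor technical one: ensuring that the super-level sets $\{f>t\}$ behave as half-intervals (so that $\{f(d_F(x_0,\cdot))>t\}$ really is a ball and $\{f(H(\cdot))>t\}$ really is a Wulff shape). Since $f$ is merely decreasing, one may need to handle flat pieces via a standard approximation by strictly decreasing functions, or, equivalently, replace $\{f>t\}$ by its closure; the inequality passes to the limit by monotone convergence. All other ingredients---equimeasurability, Bishop-Gromov, and the elementary volume inequality---are applied verbatim.
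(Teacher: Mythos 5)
Your argument is correct and follows essentially the same route as the paper's proof: Fubini/layer-cake to reduce to a pointwise volume comparison of intersections, the nesting of Wulff shapes on the Euclidean side, equimeasurability for $\{u^p>s\}$, and Bishop-Gromov (together with \eqref{volume-comp-nullaban}) for $\mathrm{Vol}_F(B_{x_0}(r))\le\omega_n r^n$. The only cosmetic difference is that you write the key step via explicit thresholds $\rho_s,r_t$, whereas the paper phrases it through a second layer-cake representation and the bound by $\min$ of the two volumes.
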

	
	\begin{proof}
		Let us denote the distance function from the point $x_0 \in M$ by $d_{x_0}(x) \coloneqq d_F(x_0,x), \forall x \in M$.	
		By Fubini's theorem, \eqref{rearrangement_ineq} is equivalent to
		\begin{equation*}
		\int_0^\infty \int_0^\infty \int_M  \chi_{\{u^p>t\}}(x) \chi_{\{f \circ d_{x_0} > s\}}(x)  {\rm d}v_F \, {\rm d}t \, {\rm d}s \leq 
		\int_0^\infty \int_0^\infty \int_{\mathbb{R}^n}  \chi_{\{{u_H^\star}^p>t\}}(x) \chi_{\{f \circ H > s\}}(x)  {\rm d}v_H \, {\rm d}t \, {\rm d}s,
		\end{equation*}
		where $\chi_S$ denotes the characteristic function of a  set $S\neq \emptyset$.  
		
		Now let $t$ and $s \in [0, \infty)$ be arbitrarily fixed, and define the set \mbox{$S_{t, \rho} \coloneqq \big\{ x \in M: \chi_{\{u^p>t\}}(x) > \rho  \big\}$} 
		for every $\rho \geq 0$. By using the fact that $f$ is decreasing,  the layer cake representation implies that 
		\begin{align*}
		I_{t,s}(u,f) & \coloneqq \int_M  \chi_{\{u^p>t\}}(x) \chi_{\{f \circ d_{x_0} > s\}}(x)  {\rm d}v_F \\ 
		& = \int_{B_{x_0}(f^{-1}(s))}  \chi_{\{u^p>t\}}(x)  {\rm d}v_F \\
		& = \int_0^\infty {\rm Vol_F} \big\{x \in B_{x_0}(f^{-1}(s)):  \chi_{\{u^p>t\}}(x) > \rho\big\}  {\rm d}\rho  \\
		& = \int_0^\infty {\rm Vol_F} \big( B_{x_0}(f^{-1}(s)) \cap S_{t, \rho} \big) {\rm d}\rho .
		\end{align*}
		As $(M,F)$ has nonnegative $n$-Ricci curvature, by the Bishop-Gromov comparison principle (see Shen \cite{Shen-volume})  we have that ${\rm Vol}_F(B_{x_0}(R)) \leq {\rm Vol}(B_0(R)) = {\rm Vol}(W_H(R))$ for every $R>0$,  where $B_0(R) \subset \mathbb R^n$ denotes the Euclidean ball with center in the origin and radius $R>0$, while $W_H(R) \subset \mathbb R^n$ has a  Wulff-shape defined in \eqref{Wulff-shape}.
		Therefore, using relation \eqref{vol-egyenloseg} and a layer cake representation again, we obtain
		\begin{align*}
		I_{t,s}(u,f) & \leq  \int_0^\infty \min \Big\{ {\rm Vol_F} \big( B_{x_0}(f^{-1}(s))\big), {\rm Vol_F} \big( S_{t, \rho} \big) \Big\}  {\rm d}\rho \\ 
		& \leq \int_0^\infty \min \Big\{ {\rm Vol} \big( B_{0}(f^{-1}(s))\big), {\rm Vol} \big( S^\star_{t, \rho} \big) \Big\}  {\rm d}\rho   = \int_0^\infty {\rm Vol} \Big( W_H(f^{-1}(s)) \cap  S^\star_{t, \rho} \Big) {\rm d}\rho  \\
		& = \int_{\mathbb{R}^n} \chi_{\{f \circ H > s\}}(x)  \chi_{\{{u_H^\star}^p>t\}}(x) {\rm d}v_H,
		\end{align*}
		where $S_{t, \rho}^\star$ denotes the anisotropic rearrangement of $S_{t, \rho}$ w.r.t the norm $H$, in the sense of \eqref{rearrangement_of_a_set}.
		This completes the proof.
	\end{proof}

In particular, Proposition \ref{Hardy_Lemma-0} yields the following Hardy-type rearrangement inequality: 
\begin{equation} \label{Hardy_rearrangement}
\int_M \frac{u(x)^p}{d_F(x_0,x)^p} {\rm d}v_F \leq \int_{\mathbb{R}^n} \frac{u_H^\star(x)^p}{H(x)^p} {\rm d}v_H, \quad \forall u \in \mathscr{C}(M).
\end{equation}

\begin{remark}
In fact, for a fixed test function $u \in \mathscr{C}(M)$, we can consider multiple rearrangements, which are all equimeasurable in the following sense.
Let $H_1$ and $H_2$ be two reversible normalized Minkowski norms on $\mathbb{R}^n$.
 We can associate to $u$ its anisotropic rearrangements w.r.t. both $H_1$ and $H_2$, in the form of $u_{H_1}^\star, u_{H_2}^\star: \mathbb{R}^n \to [0,\infty)$. Then, we have
\begin{equation}\label{Equimeasurable_Dirichlet_norm}
\int_{\mathbb{R}^n} H_1^*(D u^\star_{H_1}(x))^q \mathrm{d}v_{H_1} = \int_{\mathbb{R}^n} H_2^*(D u^\star_{H_2}(x))^q \mathrm{d}v_{H_2},
\end{equation}
for any $q \in (0, \infty)$.
Indeed, by using definition \eqref{rearrangement_def}, there exists a nonincreasing function $g: [0, \infty) \to [0, \infty)$ such that $u^\star_{H_1}(x) = g(H_1(x))$ and $u^\star_{H_2}(x) = g(H_2(x))$. As the Finsler structures $H_1$ and $H_2$ are absolutely homogeneous, we have
$$H_i^*(Du_{H_i}^\star(x)) = H_i^*\big(g'(H_i(x))DH_i(x)\big) = |g'(H_i(x))| H_i^*(DH_i(x)), \text{ for } i=1,2.$$
Applying the eikonal equation $H_i^*(DH_i(x)) = 1$ for every $ x \in \mathbb{R}^n \setminus \{0\}$, then using a change of variables, it follows that for every $q>0$, we have 
\begin{equation*}
\int_{\mathbb{R}^n} H_i^*(D u_{H_i}^\star(x))^q \mathrm{d}v_{H_i} 
= \int_{\mathbb{R}^n} |g'(H_i(x))|^q \mathrm{d}v_{H_i}  \\ 
= n \omega_n \int_0^\infty |g'(\rho)|^q \rho^{n-1} \mathrm{d}\rho,  
\end{equation*}
where we used the fact that both Minkowski norms $H_i$ are normalized, $i=1,2$.
  	
In the case when $H(x) = |x|$ is the standard Euclidean norm, $u^\star_{|\cdot|}$ turns out to be the usual radially symmetric rearrangement of $u$, for which the sets $\{x \in \mathbb{R}^n: u^\star_{|\cdot|}(x) > t\}$ are Euclidean open balls with center $0 \in \mathbb{R}^n$. 	
This radial symmetrization has proven to be particularly useful when showing the Finslerian counterparts of several Euclidean Sobolev-type inequalities, see \S \ref{sec:Sobolev_inequalities}.

On the other hand, the anisotropic rearrangement \eqref{rearrangement_def} w.r.t. an arbitrary reversible Minkowski norm $H$ can be a key technique when considering rigidity results in the spirit of Balogh and Krist\'aly \cite{BaloghKristaly} and Krist\'aly \cite{Kristaly-JMPA}.
More specifically, we believe that in certain sharp Sobolev inequalities, equality holds for some nonzero extremal function $u \in \mathscr{C}(M)$ if and only if $\mathsf{AVR}_F = 1$; in particular, when $(M,F)$ is a Finsler manifold of Berwald type, this would imply that $(M,F)$ is isometric to a Minkowski space $(\mathbb{R}^n, H)$.
\end{remark}

Finally, we present the following auxiliary result, whose proof, similarly to Balogh and Krist\'aly \cite[Lemma 3.1]{BaloghKristaly}, follows in a straightforward way by the layer cake representation: 

\begin{lemma}\label{lemma-layer} {\rm (see \cite{BaloghKristaly})}
Let $(M,F)$ be a complete, reversible Finsler manifold, $R>0$ and $x_0\in M$ an arbitrarily fixed point. If $f:[0,R] \to \mathbb R$ is a $C^1$-function on $(0,R)$, then $$\int_{B_{x_0}(R)} f(d_F(x_0,x)){\rm d}v_F = f(R){\rm Vol}_F(B_{x_0}(R)) - \int_0^R f'(r){\rm Vol}_F(B_{x_0}(r)){\rm d}r.$$
\end{lemma}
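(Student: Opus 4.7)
The plan is to reduce the identity to a standard Fubini/layer cake argument after applying the fundamental theorem of calculus to $f$. For any $x \in B_{x_0}(R)$, set $\rho(x) = d_F(x_0,x) \in [0,R)$. Since $f \in C^1((0,R))$ and can be extended continuously up to $R$, I would write
\begin{equation*}
f(\rho(x)) = f(R) - \int_{\rho(x)}^R f'(r)\,{\rm d}r = f(R) - \int_0^R f'(r)\,\chi_{(\rho(x),R)}(r)\,{\rm d}r.
\end{equation*}

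Integrating this identity over $B_{x_0}(R)$ against ${\rm d}v_F$ yields
\begin{equation*}
\int_{B_{x_0}(R)} f(\rho(x))\,{\rm d}v_F = f(R)\,{\rm Vol}_F(B_{x_0}(R)) - \int_{B_{x_0}(R)}\int_0^R f'(r)\,\chi_{(\rho(x),R)}(r)\,{\rm d}r\,{\rm d}v_F.
\end{equation*}
Next I would invoke Fubini's theorem (justified because $f'$ is continuous on $(0,R)$, the indicator is bounded, and the slices of the underlying ball have finite $F$-volume by completeness together with the Bishop--Gromov principle, so the integrand is integrable on bounded intervals away from the endpoints; a standard approximation argument handles the endpoints $0$ and $R$). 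This gives
\begin{equation*}
\int_{B_{x_0}(R)}\int_0^R f'(r)\,\chi_{(\rho(x),R)}(r)\,{\rm d}r\,{\rm d}v_F = \int_0^R f'(r)\,{\rm Vol}_F\bigl(\{x \in B_{x_0}(R) : \rho(x) < r\}\bigr)\,{\rm d}r.
\end{equation*}
Since $r < R$ forces $\{x : \rho(x) < r\} = B_{x_0}(r) \subset B_{x_0}(R)$, the inner volume is simply ${\rm Vol}_F(B_{x_0}(r))$, which yields the stated formula.

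I do not expect serious obstacles here: the only subtlety is the integrability of $f'$ on $(0,R)$, which is a $C^1$-function on the open interval but may not be bounded near the endpoints. If needed, I would prove the identity on $[\varepsilon, R-\varepsilon]$ first and then pass to the limit $\varepsilon \to 0^+$, using the continuity of $r \mapsto {\rm Vol}_F(B_{x_0}(r))$ and dominated convergence. Reversibility of $F$ enters only to ensure that $d_F(x_0,\cdot)$ is a genuine distance function so that sublevel sets coincide with forward metric balls.
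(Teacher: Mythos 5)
Your argument is correct and is essentially the worked-out version of what the paper gestures at as ``the layer cake representation'': write $f(\rho(x))=f(R)-\int_{\rho(x)}^R f'(r)\,{\rm d}r$, integrate over the ball, and swap integrals via Fubini so that the inner slice is $\mathrm{Vol}_F(B_{x_0}(r))$, with a truncation argument guarding against unbounded $f'$ near the endpoints. One small correction: the finiteness of $\mathrm{Vol}_F(B_{x_0}(r))$ follows simply from completeness via the Hopf--Rinow theorem (closed bounded sets are compact), not from Bishop--Gromov, which would require a curvature hypothesis the lemma does not impose.
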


\vspace{0.1cm}

\section{Morrey-Sobolev and Hardy-Sobolev Inequalities}  \label{sec:Sobolev_inequalities}

\subsection{Morrey-Sobolev interpolation inequality: sharp support-bound.}

Let $p > n \geq 2$. In the Euclidean case, Talenti \cite[Theorem 2.E]{Talenti} proved the following Morrey-Sobolev inequality 
\begin{equation}\label{Talenti-2}
	\| u\|_{L^\infty(\mathbb R^n)} \leq{\sf
		T}_{p,n}\, {\rm Vol}({\rm supp}\, u)^{\frac{1}{n}-\frac{1}{p}} \| \nabla u\|_{L^p(\mathbb R^n)} ,\ \ \ \forall u\in C_0^\infty(\mathbb R^n),
\end{equation}
where ${\rm supp}\, u\subset \mathbb R^n$ denotes the support of $u$, and $\mathrm{Vol}({\rm supp}\, u)$ stands for the Euclidean volume of the set ${\rm supp} u$; moreover, the constant 
\begin{equation}\label{constant-morrey-2}
	{\sf
		T}_{p,n}=n^{-\frac{1}{p}}\omega_n^{-\frac{1}{n}}\left(\frac{p-1}{p-n}\right)^\frac{1}{p'}
\end{equation}
is sharp and achieved by the function 
$$u(x)=
\left(1-|x|^\frac{p-n}{p-1}\right)_+.$$
The counterpart of \eqref{Talenti-2} on Finsler manifolds with nonnegative $n$-Ricci-curvature is given by Theorem  \ref{theorem-1-intro}.  \\

	
	{\it Proof of Theorem  \ref{theorem-1-intro}.}
		Let $u \in C_0^\infty(M)$ be arbitrarily fixed, and consider its spherically symmetric rearrangement $u_{|\cdot|}^\star: \mathbb{R}^n \to [0, \infty)$ w.r.t. the Euclidean norm $|\cdot|$.
%
		Then, by relations \eqref{volume-preservation}-\eqref{Polya-Szego} and \eqref{Talenti-2}, it follows that
		\begin{align*}
			\| u\|_{L^\infty(M)} = \| u_{|\cdot|}^\star\|_{L^\infty(\mathbb{R}^n)} 
			& \leq {\sf T}_{p,n}\,  {\rm Vol}({\rm supp}\, u_{|\cdot|}^\star)^{\frac{1}{n}-\frac{1}{p}} \|\nabla u_{|\cdot|}^\star \|_{L^p(\mathbb R^n)} \\
			& \leq 
			{\sf T}_{p,n} \, {\sf AVR}_F^{-\frac{1}{n}} {\rm Vol}_{F}({\rm supp}\, u)^{\frac{1}{n}-\frac{1}{p}} \left(\int_{M} F^*(x, D u(x))^p \mathrm{d}v_F \right)^{\frac{1}{p}},
		\end{align*}
		which is precisely \eqref{Morrey-Sobolev-Finsler-intro}. 
		
We assume by contradiction that there exists a constant $\mathcal C< {\sf T}_{F}^{\sf MS}={\sf T}_{p,n} \, {\sf AVR}_F^{-\frac{1}{n}} $ such that 
		\begin{equation}\label{egyenlet-7}
			\| u\|_{L^\infty(M)} \leq\mathcal C \, \, {\rm Vol}_{F}({\rm supp}\, u)^{\frac{1}{n}-\frac{1}{p}}\left(\int_{M} F^*(x, D u(x))^p \mathrm{d}v_F \right)^{1/p}  ,
			\ \ 
			\forall u\in C_0^\infty(M).
		\end{equation}
		We fix $x_0\in M$ and $R>0$, and define the function  $u_R: M \to [0, \infty)$  by
		$$u_R(x)=\left(1-\left(\frac{d_F(x_0,x)}{R}\right)^\frac{p-n}{p-1}\right)_+,\ x\in M.
		$$
		By the eikonal equation \eqref{tavolsag-derivalt} and Lemma \ref{lemma-layer}, it follows that
		\begin{eqnarray*}
			\int_{M} F^*(x, D u_R(x))^p \mathrm{d}v_F
			&=&\frac{1}{R^p} \left(\frac{p-n}{p-1}\right)^p	\int_{B_{x_0}(R)}\left(\frac{d_F(x_0,x)}{R}\right)^{(1-n)p'}{\rm d}v_F \\ 
			&=& \frac{1}{R^p} \left(\frac{p-n}{p-1}\right)^p \left. \bigg({\rm Vol}_F(B_{x_0}(R))-\right.\\&& \left.-(1-n)p'\int_0^1t^{(1-n)p'-1}{\rm Vol}_F(B_{x_0}(Rt)){\rm d}t\right).
		\end{eqnarray*}
		The latter relation and Lebesgue's dominated convergence theorem imply that 
		$$\lim_{R\to \infty}\frac{1}{R^{n-p}} \int_{M} F^*(x, D u_R(x))^p \mathrm{d}v_F 
		= n \omega_n \left(\frac{p-n}{p-1}\right)^{p-1}{\sf AVR}_F.$$
		Note that $\| u_R\|_{L^\infty(M)}=1$ and ${\rm supp}\, u_R=B_{x_0}(R)$. If we use $u_R$ as a test function in \eqref{egyenlet-7}, an argument via  
		the latter limit gives that
		$$1\leq \mathcal C\,  \omega_n^\frac{1}{n}n^\frac{1}{p}\left(\frac{p-n}{p-1}\right)^\frac{1}{p'}{\sf AVR}_F^\frac{1}{n},$$
		which is equivalent to $\mathcal C\geq {\sf
			T}_{p,n}\, {\sf AVR}_F^{-\frac{1}{n}}={\sf T}_{F}^{\sf MS},$ contradicting our initial assumption.	\hfill $\square$

\subsection{Morrey-Sobolev interpolation inequality: sharp $L^1$-bound.}
The following Morrey-Sobolev inequality is proved by Talenti \cite[Theorem 2.C]{Talenti}, stating that for every $p > n \geq 2$, one has
\begin{equation}\label{Morrey_Sobolev_Talenti}
\|u\|_{L^\infty(\mathbb{R}^n)} \leq
\mathsf{C}_{p,n} \|u\|^{1-\eta}_{L^1(\mathbb{R}^n)} \|\nabla u\|^{\eta}_{L^p(\mathbb{R}^n)}, \ \ \ \forall u \in C_0^\infty(\mathbb{R}^n),
\end{equation}
where 
\begin{equation}\label{eta-def}
\eta = \frac{np}{np+p-n}.
\end{equation}
Moreover, the constant 
\begin{equation}\label{constant-morrey-1}
\mathsf{C}_{p,n}  =  (n\omega_n^\frac{1}{n})^{-\frac{np'}{n+p'}}\left(\frac{1}{n}+\frac{1}{p'}\right)
\left(\frac{1}{n}-\frac{1}{p}\right)^\frac{(n-1)p'-n}{n+p'}
\left({\sf B}\left(
\frac{1-n}{n}p'+1,p'+1\right)\right)^\frac{n}{n+p'}
\end{equation}
is sharp and achieved by the function 
$$u(x)=\left\{
\begin{array}{lll}
\displaystyle\int_{|x|}^1
r^\frac{1-n}{p-1}(1-r^n)^\frac{1}{p-1}{\rm d}r, & & {\rm
	if}\ |x|\leq 1;
\\ 0,&  & {\rm otherwise}.
\end{array}
\right.$$

We prove the counterpart of  \eqref{Morrey_Sobolev_Talenti} on Finsler manifolds with nonnegative $n$-Ricci-curvature:\\

%

{\it Proof of Theorem   \ref{Th_Morrey1-0}.}
Let $u \in C_0^\infty(M)$ be arbitrarily fixed, and consider its spherically symmetric rearrangement $u_{|\cdot|}^\star: \mathbb{R}^n \to [0, \infty)$ w.r.t. the Euclidean norm $|\cdot|$. By applying relations \eqref{norm-preservation} and \eqref{Polya-Szego}, and using Talenti's inequality \eqref{Morrey_Sobolev_Talenti}, it follows that	
\begin{align}\label{eq:chain_ineq}
\|u\|_{L^\infty(M)} = \|u^\star_{|\cdot|}\|_{L^\infty(\mathbb{R}^n)} 
& \leq 
\mathsf{C}_{p,n} \|u^\star_{|\cdot|}\|^{1-\eta}_{L^1(\mathbb{R}^n)} \|\nabla u^\star_{|\cdot|}\|^{\eta}_{L^p(\mathbb{R}^n)} \nonumber \\
& \leq \mathsf{C}_{p,n} \|u\|^{1-\eta}_{L^1(M)} \mathsf{AVR}_F^{-\frac{^\eta}{n}} \left(\int_{M} F^*(x, D u(x))^p \mathrm{d}v_F \right)^{\frac{\eta}{p}}, 
\end{align}
which is exactly \eqref{Morrey_Sobolev_Finsler-0}. 

As for the optimality  of the constant, we assume that $\mathsf{C}_{F}^{\sf MS}  =  \mathsf{C}_{p,n} {\sf AVR}_F^{-\frac{\eta}{n}}$ is not sharp in \eqref{Morrey_Sobolev_Finsler-0}, i.e., there exists $\mathcal C < \mathsf{C}_{F}^{\sf MS}$ such that
\begin{equation}\label{eq:not_sharp1}
\|u\|_{L^\infty(M)} \leq
\mathcal C\, \left(\int_M |u(x)| \mathrm{d}v_F \right)^{1-\eta}
\left(\int_M F^*(x, D u(x))^p \mathrm{d}v_F \right) ^{\frac{\eta}{p}}, \ \forall u\in C_0^\infty(M).
\end{equation}
Let $h,H:(0,1]\to \mathbb R$ be the functions
$$h(r)=r^\frac{1-n}{p-1}(1-r^n)^\frac{1}{p-1} \quad {\rm and } \quad  H(s)=\int_0^sh(r){\rm d}r.$$ 
Let $x_0\in M$ and $R>0$ be fixed, and consider the function  $u_R: M \to [0, \infty)$ defined by
$$u_R(x)= \left\{
\begin{array}{lll}
\displaystyle H(1)-H\left(\frac{d_F(x_0,x)}{R}\right), & & {\rm if}\ x\in B_{x_0}(R);
\\ 0,&  & {\rm otherwise}.
\end{array}
\right. $$
First, we have that 
\begin{equation}\label{vegtelen-estimate}
\| u_R\|_{L^\infty(M)} =H(1)=\int_0^1h(r){\rm d}r=\frac{1}{n}{\sf B}\left(
\frac{1-n}{n}p'+1,p'\right).
\end{equation}
By using Lemma \ref{lemma-layer} and a change of variables, it turns out that 
\begin{eqnarray}\label{11-00}
\nonumber	\int_M |u_R(x)| \mathrm{d}v_F &=& \int_{B_{x_0}(R)}\left(H(1)-H\left(\frac{d_F(x_0,x)}{R}\right)\right){\rm d}v_F\\&=& \frac{1}{R}\int_0^R{\rm Vol}_F(B_{x_0}(r))H'\left(\frac{r}{R}\right){\rm d}r\nonumber 
\\&=& \int_0^1{\rm Vol}_F(B_{x_0}(Rt))h(t){\rm d}t.
\end{eqnarray}
On the other hand, since 
$$D u_R(x) = -\frac{1}{R} H'\left(\frac{d_F(x_0,x)}{R}\right) D d_F(x_0,x)\ \ {\rm for\ a.e.}\ x\in B_{x_0}(R),$$
the eikonal equation \eqref{tavolsag-derivalt} and the absolute homogeneity of the Finsler structure $F$ yield that
\begin{eqnarray}\label{22-00}
\nonumber\int_M F^*(x, D u_R(x))^p \mathrm{d}v_F &=&
\frac{1}{R^p} \int_{B_{x_0}(R)}h^p\left(\frac{d_F(x_0,x)}{R}\right){\rm d}v_F\\&=& -\frac{1}{R^p}\int_0^1{\rm Vol}_F(B_{x_0}(Rt))\cdot(h^p)'(t){\rm d}t,
\end{eqnarray}
where we used Lemma \ref{lemma-layer} and a change of variables.
 
By density reasons, the function $u_R$ can be used as a test function in \eqref{eq:not_sharp1}, i.e.,
\begin{equation*}
\| u_R\|_{L^\infty(M)} \leq 
\mathcal C\, \left(\int_M |u_R(x)| \mathrm{d}v_F \right)^{1-\eta}
\left(\int_M F^*(x, D u_R(x))^p \mathrm{d}v_F \right) ^{\frac{\eta}{p}}.
\end{equation*}
Furthermore, Lebesgue's dominated convergence theorem and relations  \eqref{11-00} and \eqref{22-00} imply that
\begin{equation*}
\lim_{R\to \infty} \frac{1}{R^n}\int_M |u_R(x)| \mathrm{d}v_F = \omega_n {\sf AVR}_F \int_0^1t^nh(t){\rm d}t = \omega_n{\sf AVR}_F\, \frac{1}{n}{\sf B}\left(
\frac{1-n}{n}p'+2,p'\right),
\end{equation*}
and
\begin{align*}
\lim_{R\to \infty} \frac{1}{ R^{n-p}} \int_M F^*(x, D u_R(x))^p \mathrm{d}v_F &= -\omega_n{\sf AVR}_F \int_0^1t^n(h^p)'(t){\rm d}t \\ &= \omega_n{\sf AVR}_F\,{\sf B}\left(
\frac{1-n}{n}p'+1,p'+1\right).
\end{align*}
Therefore, by using the latter limits and relations \eqref{vegtelen-estimate} and \eqref{constant-morrey-1}, a straightforward manipulation of the above terms implies   
$${\sf C}_{p,n}\leq \mathcal C\, {\sf AVR}_F^{1-\eta+\frac{\eta}{p}}.$$
Since $$1-\eta+\frac{\eta}{p}=\frac{\eta}{n},$$ see \eqref{eta-def}, the latter inequality contradicts our initial assumption $\mathcal C<\mathsf{C}_{F}^{\sf MS}$, which yields the sharpness of $\mathsf{C}_{F}^{\sf MS}$ in \eqref{Morrey_Sobolev_Finsler-0}.  \hfill $\square$\\

Concerning the equality in the Morrey-Sobolev inequalities  \eqref{Morrey-Sobolev-Finsler-intro} and  \eqref{Morrey_Sobolev_Finsler-0}, we can state the following rigidity result in the  case when $(M,F) = (M,g)$ is a Riemannian manifold; in particular, it turns out that the existence of nonzero extremal functions implies that the manifold is isometric to the Euclidean space:

\begin{theorem} \label{Th_Morrey_Rigidity}
	Let $(M, g)$ be a noncompact, complete $n$-dimensional Riemannian manifold having $\mathsf{Ric} \geq 0$, $0 < \mathsf{AVR}_g \leq 1$, and let $2 \leq n < p$. Then the following statements are equivalent: 
	\begin{enumerate}
		\item[{\rm (i)}]  Equality holds in 
		\eqref{Morrey-Sobolev-Finsler-intro}
		 for some nonzero and nonnegative function $u \in  \mathscr{C}(M);$
		\item[{\rm (ii)}]  Equality holds in  \eqref{Morrey_Sobolev_Finsler-0} for some nonzero and nonnegative function $u \in  \mathscr{C}(M);$
		\item[{\rm (iii)}]  $(M, g)$ is isometric to the Euclidean space $(\mathbb R^n, g_0)$.
	\end{enumerate}

	\begin{proof} The proofs of the equivalences 
		 ${({\rm i}) \Leftrightarrow ({\rm iii})}$ and ${({\rm ii}) \Leftrightarrow ({\rm iii})}$ are  analogous; we shall present the latter. 		Suppose that equality holds in \eqref{Morrey_Sobolev_Finsler-0} for some nonzero and nonnegative function $u \in  \mathscr{C}(M)$. Consequently, equalities hold in the chain of inequalities \eqref{eq:chain_ineq}, therefore we have equality in the P\'olya-Szeg\H o inequality \eqref{Polya-Szego} as well. As the latter inequality is rigid in the Riemannian case, see Balogh and Krist\'aly \cite[Proposition 3.1]{BaloghKristaly}, we obtain that 
		$$\mathsf{AVR}_g = 1,$$ i.e., $(M, g)$ is isometric to the Euclidean space $(\mathbb R^n, g_0)$, see e.g. Petersen \cite{Petersen}. The converse is trivial.
	\end{proof}
\end{theorem}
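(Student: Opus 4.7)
The plan is to exploit the two-step derivation of the Morrey-Sobolev inequalities: first a P\'olya-Szeg\H o reduction to the spherically symmetric rearrangement $u_{|\cdot|}^\star$ on $\mathbb R^n$ and then Talenti's sharp Euclidean inequality. Consequently, equality on the Finsler side must force equality at each intermediate step, and in particular at the P\'olya-Szeg\H o step \eqref{Polya-Szego}; it is that intermediate rigidity that I want to leverage.

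The two equivalences $(\mathrm{i})\Leftrightarrow(\mathrm{iii})$ and $(\mathrm{ii})\Leftrightarrow(\mathrm{iii})$ are proved by exactly the same pattern, so I would present only the second in detail and note that the first is analogous (using the corresponding chain in the proof of Theorem \ref{theorem-1-intro}). Assuming equality in \eqref{Morrey_Sobolev_Finsler-0} for a nonzero nonnegative $u \in \mathscr C(M)$, and using that $\|u\|_{L^\infty(M)} = \|u^\star_{|\cdot|}\|_{L^\infty(\mathbb R^n)}$ and $\|u\|_{L^1(M)} = \|u^\star_{|\cdot|}\|_{L^1(\mathbb R^n)}$ by \eqref{norm-preservation}, every inequality in the chain \eqref{eq:chain_ineq} must be sharp. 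Since $u$ is nonzero, neither Talenti's step nor the P\'olya-Szeg\H o step can be vacuous, so equality must hold in \eqref{Polya-Szego} applied to $u$ with $H=|\cdot|$. I would then invoke the rigidity of the Riemannian P\'olya-Szeg\H o inequality, namely Proposition 3.1 of \cite{BaloghKristaly}, which forces $\mathsf{AVR}_g = 1$. Finally, by the volume cone rigidity theorem for complete manifolds with $\mathsf{Ric}\ge 0$ (Cheeger-Colding; see e.g.\ Petersen \cite{Petersen}), this identifies $(M,g)$ with $(\mathbb R^n, g_0)$ up to isometry.

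For $(\mathrm{iii})\Rightarrow (\mathrm{i}),(\mathrm{ii})$, the claim is immediate: when $(M,g)=(\mathbb R^n,g_0)$ one has $\mathsf{AVR}_g = 1$, so the Finsler inequalities collapse to Talenti's sharp inequalities \eqref{Talenti-2} and \eqref{Morrey_Sobolev_Talenti}, which admit the explicit extremals recalled in Section \ref{sec:Sobolev_inequalities}. The main obstacle I anticipate is not in the present theorem itself but in justifying the propagation of equality into \eqref{Polya-Szego}; this rests entirely on the Riemannian rigidity of P\'olya-Szeg\H o established in \cite{BaloghKristaly}, which is exactly why the statement is confined to the Riemannian framework rather than the full Finsler one (where such a rigidity is, to the authors' knowledge, not yet available).
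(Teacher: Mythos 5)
Your proposal is correct and follows essentially the same route as the paper: equality forces equality throughout the chain \eqref{eq:chain_ineq}, hence in the P\'olya-Szeg\H o inequality \eqref{Polya-Szego}, whose Riemannian rigidity (Balogh--Krist\'aly \cite[Proposition 3.1]{BaloghKristaly}) gives $\mathsf{AVR}_g=1$ and then isometry with $(\mathbb R^n,g_0)$. The only cosmetic difference is that you name the volume-cone rigidity (Cheeger--Colding) explicitly, whereas the paper simply cites Petersen \cite{Petersen}.
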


\begin{remark}\rm
	A natural question arises on the validity of Theorem \ref{Th_Morrey_Rigidity} not only for functions belonging to $\mathscr{C}(M)$ but to the appropriate Sobolev spaces associated to the Morrey-Sobolev inequalities  \eqref{Morrey-Sobolev-Finsler-intro}
	and  \eqref{Morrey_Sobolev_Finsler-0}. Such a question requires a deeper analysis, since usually the 'small' subspace of functions where Sobolev inequalities can be easily obtained do not contain the expected extremal functions, while after the approximation/density arguments we cannot track back the equality cases in the proof; see e.g. Brothers and Ziemer \cite{BZ}, Balogh and Krist\'aly \cite{BaloghKristaly}. In our case (Theorem \ref{Th_Morrey_Rigidity}) however, the corresponding extremal functions belong to $\mathscr{C}(M)$. 
\end{remark}

In the spirit of the latter result, one should ask whether there exist similar rigidity statements in the general, Finslerian setting as well. 
More specifically, we formulate the following question:

\begin{quotation}
\textit{If $(M,F)$ is a noncompact, complete $n$-dimensional reversible Finsler manifold with  ${\sf Ric}_n\geq 0$, $0 < \mathsf{AVR}_F \leq 1$, and $2 \leq n < p$, is it true that if there exists a nonzero and nonnegative extremal function $u \in  \mathscr{C}(M)$ of inequality \eqref{Morrey-Sobolev-Finsler-intro} $($or \eqref{Morrey_Sobolev_Finsler-0}, respectively$),$ then $\mathsf{AVR}_F = 1$ $($thus, the manifold is a locally Minkowski space$)?$}
\end{quotation}
We believe that the anisotropic rearrangement \eqref{rearrangement_def} w.r.t.\ an arbitrary reversible Minkowski norm $H$ shall be a key ingredient when considering such problems. However,  the lack of a rigid isoperimetric inequality impedes this investigation; indeed, the characterization of the equality in the isoperimetric inequality \eqref{isoperimetric} -- and hence in the P\'olya-Szeg\H{o} inequality \eqref{Polya-Szego} -- is currently available only on Riemannian manifolds, see Brendle \cite{Brendle}, and Balogh and Krist\'aly \cite{BaloghKristaly}.

\subsection{Hardy-Sobolev-type inequalities.}

In this section we consider Sobolev inequalities involving a Hardy-type singular term of the form $x \mapsto d_F(x_0,x)^{-p}$, where $x_0\in M$ is any fixed point, $p>1$. \\


{\it Proof of Theorem \ref{theorem-Hardy}.} Due to a density reason, since $F$ is reversible,  we may assume without loss of generality that $u\geq 0.$
Let us define the symmetric rearrangement of $u$ w.r.t.\   the Euclidean norm $|\cdot|$, i.e.,   $u_{|\cdot|}^\star: \mathbb{R}^n \to [0, \infty)$.	
Using the rearrangement inequalities \eqref{Polya-Szego},  \eqref{Hardy_rearrangement}, and the classical Euclidean Hardy-Sobolev inequality, see e.g. Balinsky, Evans and
Lewis \cite[Corollary 1.2.6]{BalinskyEvansLewis}, we have
\begin{align*}
\int_{M} F^*(x, D u(x))^p \mathrm{d}v_F & \geq  \mathsf{AVR}_F^{\frac{p}{n}} \int_{\mathbb{R}^n} | \nabla u_{|\cdot|}^\star(x) |^p \mathrm{d}x 
\\ & \geq \mathsf{AVR}_F^{\frac{p}{n}} \left(\frac{n-p}{p}\right)^p
\int_{\mathbb{R}^n} \frac{u_{|\cdot|}^\star(x)^p}{|x|^p} \mathrm{d}x \\
& \geq
\mathsf{AVR}_F^\frac{p}{n}
\left(\frac{n-p}{p}\right)^p \int_{M} \frac{|u(x)|^p}{d_F(x_0, x)^p} \mathrm{d}v_F,
\end{align*} 
which concludes the proof. 
\hfill $\square$

\begin{remark}
The sharpness of the Hardy-Sobolev inequality \eqref{Hardy-intro} is open in the generic Finsler setting.
This fact can be attributed to the lack of extremal functions in the Euclidean case, thus arguments similar to Theorems \ref{theorem-1-intro} and \ref{Th_Morrey1-0} no longer yield the expected conclusion.
\end{remark}

In the particular case when $p=2$, $\Omega\subset M$ is a smooth bounded open set and $x_0 \in \Omega$, we have the following \textit{Brezis-Poincar\'e-V\'azquez} inequality: 

\begin{theorem}\label{Th:BrezisPoincareVazquez}
	Let $(M, F)$ be a noncompact, complete $n$-dimensional reversible Finsler manifold with  ${\sf Ric}_n\geq 0$, $0 < \mathsf{AVR}_F \leq 1$, and $n \geq 2$. Let $\Omega\subset M$ be a smooth, bounded open set with $x_0 \in \Omega$ arbitrarily fixed. If $\mu \in \left[0,\frac{(n-2)^2}{4}{\sf AVR}_F^\frac{2}{n}\right]$, then for every $u\in C_0^\infty(\Omega)$, we have
	\begin{equation}\label{BVP-1}
	\int_\Omega F^*(x, D u(x))^2{\rm d}v_F - \mu \int_\Omega \frac{u(x)^2}{d_F(x_0,x)^2} {\rm d}v_F \geq {\sf S}_{\mu,F}(\Omega)\, \int_\Omega {u(x)^2} {\rm d}v_F,
	\end{equation}
	where
	\begin{equation} \label{S_mu_F}
	{\sf S}_{\mu,F}(\Omega) = {\sf AVR}_F^\frac{2}{n} j_{\overline \mu}^2\, \left( \frac{\omega_n}{{\rm Vol}_F(\Omega)} \right)^\frac{2}{n} \quad \text{and} \quad\quad \overline 
	\mu = 
	 \sqrt{\frac{(n-2)^2}{4}-\mu\, {\sf AVR}_F^{-\frac{2}{n}}},
	\end{equation}
	$j_{\overline \mu}$ being the first positive zero of the Bessel function of the first kind $J_{\overline \mu}$.
	
	\begin{proof}	
	Let $B \coloneqq \Omega_{|\cdot|}^\star$ and $u_{|\cdot|}^\star: \mathbb{R}^n \to [0, \infty)$ be the symmetric rearrangements of $\Omega$ and $u$ w.r.t. the Euclidean norm $|\cdot|$, i.e., $B$ is an Euclidean open ball with center in the origin such that ${\rm Vol}(B) = {\rm Vol}_{F}(\Omega)$.
	 
	On the one hand, by inequality \eqref{Hardy-intro}, the left hand side of \eqref{BVP-1} turns out to be nonnegative whenever  $\mu\leq \frac{(n-2)^2}{4}{\sf AVR}_F^\frac{2}{n}$. On the other hand, relations \eqref{Polya-Szego}, \eqref{Hardy_rearrangement} and \eqref{norm-preservation} together with the result of Krist\'aly and Szak\'al \cite[Theorem 1.1]{Kristaly-JDE} imply that if $\mu\in \left[0,\frac{(n-2)^2}{4}{\sf AVR}_F^\frac{2}{n}\right]$,  one has
	\begin{align*}
	\int_\Omega F^*(x, Du(x))^2 {\rm d}v_F - \mu\int_\Omega \frac{u(x)^2}{d_F(x_0,x)^2} {\rm d}v_F & \geq {\sf AVR}_F^\frac{2}{n}\int_B|\nabla u_{|\cdot|}^\star(x)|^2{\rm d}x-\mu\int_B \frac{u_{|\cdot|}^\star(x)^2}{|x|^2} {\rm d}x\\&\geq {\sf AVR}_F^\frac{2}{n}\, j_{\overline \mu}^2\, \omega_n^\frac{2}{n}\, {\rm Vol}(B)^{-\frac{2}{n}} \int_B  u_{|\cdot|}^\star(x)^2{\rm d}x
	\\&={\sf AVR}_F^\frac{2}{n}\, j_{\overline \mu}^2\, \omega_n^\frac{2}{n}\, {\rm Vol}_{F}(\Omega)^{-\frac{2}{n}} \int_{\Omega} u(x)^2{\rm d}v_F,
	\end{align*}
	which ends the proof. 
	\end{proof}
\end{theorem}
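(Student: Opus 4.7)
The plan is to reduce \eqref{BVP-1} to a known sharp Euclidean Brezis-Poincar\'e-V\'azquez-type inequality on a Euclidean ball by means of spherically symmetric rearrangement. Fix $u \in C_0^\infty(\Omega)$; by reversibility of $F$ we may assume $u \geq 0$. Let $u_{|\cdot|}^\star : \mathbb{R}^n \to [0,\infty)$ denote the Euclidean symmetric rearrangement of $u$, and let $B \subset \mathbb{R}^n$ be the Euclidean ball centered at the origin with $\mathrm{Vol}(B) = \mathrm{Vol}_F(\Omega)$; by construction $\mathrm{supp}\, u_{|\cdot|}^\star \subset B$.

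Two rearrangement estimates are the workhorses. First, Theorem \ref{th:Polya-Szego} applied with $p=2$ and $H=|\cdot|$ gives
$$\int_\Omega F^*(x, Du(x))^2 \, \mathrm{d}v_F \;\geq\; \mathsf{AVR}_F^{2/n} \int_B |\nabla u_{|\cdot|}^\star(x)|^2 \, \mathrm{d}x .$$
Second, the Hardy-type rearrangement inequality \eqref{Hardy_rearrangement} (an instance of Proposition \ref{Hardy_Lemma-0} with $f(r) = r^{-2}$) yields
$$\int_\Omega \frac{u(x)^2}{d_F(x_0,x)^2} \, \mathrm{d}v_F \;\leq\; \int_B \frac{u_{|\cdot|}^\star(x)^2}{|x|^2} \, \mathrm{d}x .$$
Combining these and factoring out $\mathsf{AVR}_F^{2/n}$, the left-hand side of \eqref{BVP-1} is bounded below by
$$\mathsf{AVR}_F^{2/n} \left( \int_B |\nabla u_{|\cdot|}^\star|^2 \, \mathrm{d}x \;-\; \tilde\mu \int_B \frac{(u_{|\cdot|}^\star)^2}{|x|^2} \, \mathrm{d}x \right), \qquad \tilde\mu := \mu \, \mathsf{AVR}_F^{-2/n}.$$

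The restriction $\mu \in \bigl[0, \tfrac{(n-2)^2}{4} \mathsf{AVR}_F^{2/n}\bigr]$ translates exactly into $\tilde\mu \in \bigl[0, \tfrac{(n-2)^2}{4}\bigr]$, which is the precise range in which the sharp Euclidean Brezis-V\'azquez inequality on a ball is applicable. That inequality (in the form used by Krist\'aly and Szak\'al) states that for every $v \in C_0^\infty(B)$,
$$\int_B |\nabla v|^2 \, \mathrm{d}x \;-\; \tilde\mu \int_B \frac{v^2}{|x|^2} \, \mathrm{d}x \;\geq\; j_{\overline\mu}^2 \left( \frac{\omega_n}{\mathrm{Vol}(B)} \right)^{2/n} \int_B v^2 \, \mathrm{d}x ,$$
where $\overline\mu = \sqrt{(n-2)^2/4 - \tilde\mu}$. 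Applying this with $v = u_{|\cdot|}^\star$ and invoking the $L^2$-preservation \eqref{norm-preservation}, which gives $\int_B (u_{|\cdot|}^\star)^2 \, \mathrm{d}x = \int_\Omega u^2 \, \mathrm{d}v_F$, together with $\mathrm{Vol}(B) = \mathrm{Vol}_F(\Omega)$, one recovers precisely the constant ${\sf S}_{\mu,F}(\Omega)$ from \eqref{S_mu_F} and concludes \eqref{BVP-1}.

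The main delicate point is the parameter bookkeeping: the factor $\mathsf{AVR}_F^{2/n}$ produced by P\'olya-Szeg\H{o} must be absorbed into the coefficient of the Hardy term so that the rescaled parameter $\tilde\mu$ lands in the critical interval $[0, (n-2)^2/4]$; this is precisely why the hypothesis on $\mu$ carries the factor $\mathsf{AVR}_F^{2/n}$ and why the Bessel zero $j_{\overline\mu}$ (with $\overline\mu$ expressed in terms of $\mathsf{AVR}_F^{-2/n}$) is the right object appearing in ${\sf S}_{\mu,F}(\Omega)$. Apart from this matching, the argument is a clean chain of rearrangement reductions, requiring no truncation or limit passage.
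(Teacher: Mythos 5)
Your proposal is correct and follows essentially the same route as the paper: apply the P\'olya-Szeg\H{o} inequality \eqref{Polya-Szego} to the Dirichlet term, the Hardy-type rearrangement inequality \eqref{Hardy_rearrangement} to the singular term, and then invoke the sharp Euclidean Brezis--V\'azquez inequality of Krist\'aly--Szak\'al on the ball $B$, closing with $L^2$-preservation \eqref{norm-preservation} and $\mathrm{Vol}(B)=\mathrm{Vol}_F(\Omega)$. Your explicit introduction of the rescaled parameter $\tilde\mu=\mu\,\mathsf{AVR}_F^{-2/n}$ makes transparent a step the paper leaves implicit in the chain of displayed inequalities, namely why the hypothesis on $\mu$ carries the factor $\mathsf{AVR}_F^{2/n}$ and why $\overline\mu$ is defined with the compensating factor $\mathsf{AVR}_F^{-2/n}$; otherwise the arguments coincide.
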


	We conclude this subsection with some comments concerning the sharpness and attainability of the best constant in the Brezis-Poincar\'e-V\'azquez inequality. Namely, we have that:
\begin{enumerate}[(i)]
	\item[$\bullet$] if $(M,F)$ is isometric to a Minkowski space $(\mathbb{R}^n, H)$ (thus,  ${\sf AVR}_F = 1$ in particular), the constant ${\sf S}_{\mu,F}(\Omega)$ is sharp and attained (for sufficiently small $\mu$) if and only if the set $\Omega$ has a Wulff-shape, see Krist\'aly and Szak\'al \cite[Theorem 1.1]{Kristaly-JDE}; 
	
	\item[$\bullet$] if $(M,F) = (M,g)$ is a Riemannian manifold, the constant ${\sf S}_{0,F}(\Omega)$ is sharp
	 and it is attained whenever $(M,g)$ is isometric to the usual Euclidean space $(\mathbb R^n,g_0)$ and $\Omega\subset M$ is isometric to a ball in $\mathbb R^n$, see Balogh and Krist\'aly \cite[Theorem 3.5]{BaloghKristaly}.	
\end{enumerate}
The latter statements can be reformulated in terms of eigenvalues for a model problem. Indeed, putting ourselves into  the setting of Theorem \ref{Th:BrezisPoincareVazquez}, we consider the \textit{eigenvalue problem}
$$
\begin{cases}
-\Delta_F u(x) - \mu \displaystyle\frac{u(x)}{d_F(x_0, x)^2} = \lambda u(x), \quad x \in \Omega, \\
u \in W_{0,F}^{1,2}(\Omega).
\end{cases}
\eqno{(EP)_{\mu,\lambda}}
$$
Then, we can prove that
\begin{enumerate}[(i)]
	\item[$\bullet$] if $(M,F)$ is a Minkowski space,  $\lambda={\sf S}_{\mu,F}(\Omega)$ is the first eigenvalue  (with sufficiently small $\mu$) for the problem $(EP)_{\mu,\lambda}$ if and only if  $\Omega$ has a Wulff-shape; 
	
	\item[$\bullet$] if $(M,F) = (M,g)$ is a Riemannian manifold, 
	$\lambda={\sf S}_{0,F}(\Omega)$ is the first eigenvalue for the problem $(EP)_{0,\lambda}$ if and only if $(M,g)$ is isometric to the Euclidean space $(\mathbb R^n,g_0)$ and $\Omega\subset M$ is isometric to a ball in $\mathbb R^n.$	
\end{enumerate}
In general, however, the sharpness of ${\sf S}_{\mu, F}(\Omega)$ in \eqref{BVP-1} (and its attainability) remains an open question.

\subsection{Example.}\label{subsection-example}

 Riemannian manifolds with nonnegative Ricci curvature and positive asymptotic volume ratio are provided e.g.\ in Balogh and Krist\'aly \cite{BaloghKristaly}. In the sequel, we construct a family of non-Riemannian Finsler manifolds where our results apply, i.e.,  noncompact, complete $n$-dimensional reversible Finsler manifolds $(M,F)$ with  ${\sf Ric}_n\geq 0$ and $0 < \mathsf{AVR}_F \leq 1$. 

To do this, we endow the space $\mathbb R^{n-1}$ ($n \geq 3$) with a Riemannian
metric $g$ such that $(\mathbb R^{n-1},g)$ is complete with
nonnegative Ricci curvature and assume that the induced warped metric $\tilde g$ on $\mathbb R^{n-1}\times \mathbb R$, defined by $$\tilde g_{(x,t)}(v,w)=\sqrt{g_x(v,v)+w^2},\ \  (x,t)\in \mathbb R^{n}, (v,w)\in T_{x}\mathbb R^{n-1}\times  T_t\mathbb R,$$
has the property that $0 < \mathsf{AVR}_{\tilde g} \leq 1$; the family of such metrics is rich, see \cite{BaloghKristaly}.  

For any fixed $\varepsilon>0$, we consider 
on $\mathbb R^{n}=\mathbb R^{n-1}\times \mathbb R$ the Finsler metric
$F_\varepsilon:T\mathbb R^{n}\longrightarrow [0,\infty)$ given by
\[ F_\varepsilon((x,t),(v,w))=\sqrt{g_x(v,v)+w^2 +
	\varepsilon \sqrt{g_x(v,v)^2+w^4}},\ \  (x,t)\in \mathbb R^{n}, (v,w)\in T_{x}\mathbb R^{n-1}\times  T_t\mathbb R, \]
endowed with its natural Busemann-Hausdorff measure. 
  By Krist\'aly and Ohta \cite{Kri-Ohta}, we know that $(\mathbb R^{n},F_\varepsilon)$ is a noncompact, complete, reversible non-Riemannian Berwald space with nonnegative Ricci curvature. In particular, $(\mathbb R^{n},F_\varepsilon)$ being a Berwald space, it has vanishing mean covariation, thus $\mathsf{Ric}_n \geq 0$. By Bishop-Gromov comparison principle we clearly have that $\mathsf{AVR}_{F_\varepsilon} \leq 1$. It remains to show that $(M,F_\varepsilon)$ has Euclidean volume growth, i.e., $\mathsf{AVR}_{F_\varepsilon} >0.$ To this end, we observe that  $$\tilde g_{(x,t)}(v,w)\leq F_\varepsilon((x,t),(v,w))\leq \sqrt{1+\varepsilon}\tilde g_{(x,t)}(v,w),\ \  (x,t)\in \mathbb R^{n}, (v,w)\in T_{x}\mathbb R^{n-1}\times  T_t\mathbb R,$$
  thus the density functions from  \eqref{Hausdorff_measure} verify   $ \sigma_{F_\varepsilon}\geq \sigma_{\tilde g}$. Moreover,  based on the assumption that $0 < \mathsf{AVR}_{\tilde g} \leq 1$,   simple estimates show that  $$ \mathsf{AVR}_{F_\varepsilon}\geq  \frac{\mathsf{AVR}_{\tilde g}}{(1+\varepsilon)^\frac{n}{2}}>0,$$ which concludes our claim.

\section{Applications to PDEs}\label{section-PDE}

\subsection{Multiple solutions for a Dirichlet problem	involving the $p$-Finsler-Laplacian.}

In this section, we provide an application of Theorem \ref{theorem-1-intro} by considering the Dirichlet problem
\begin{align}\label{eq:D}
\begin{cases}
-\Delta_{F,p} u(x) = \lambda h(u(x)), \quad x \in \Omega \\
u \in W_{0,F}^{1,p}(\Omega)   ,
\end{cases}
\tag{$\mathcal{D}_\lambda$}
\end{align}
where   $(M, F)$ is an $n$-dimensional Finsler manifold, $\Omega \subset M$ is a bounded open set with $C^1$ boundary, $\Delta_{F,p}$ is the  $p$-Finsler-Laplace operator with  $p > n$, $\lambda>0,$ and $h: \mathbb{R} \to \mathbb{R}$ is a continuous function with $h(0) = 0$. Furthermore, for each $s \in \mathbb{R}$, let $H(s) = \displaystyle\int_0^s h(t) {\rm d}t$, and suppose that
\begin{description}
	\item[($A_1$)]\label{A1} $H(s)\geq 0$ for all $s \geq 0$;
	\item[($A_2$)]\label{A2} ${0}< \displaystyle\limsup_{s \to +\infty} \frac{H(s)}{s^p} < +\infty.$
\end{description}

In the spirit of Cammaroto, Chinnì and Di Bella \cite{Cammaroto},  one can prove the existence of  infinitely many weak solutions of problem \eqref{eq:D}, as follows:

\begin{theorem}\label{Appl:1}
Let $(M, F)$ be a noncompact, complete $n$-dimensional reversible Finsler manifold with  $\mathsf{Ric}_n \geq 0$, $0 < \mathsf{AVR}_F \leq 1$, and $2 \leq n < p < \infty$. Let $\Omega \subset M$ be a bounded open set with $C^1$ boundary, and let $h: \mathbb{R} \to \mathbb{R}$ be a continuous function with $h(0) = 0$, such that $H$ verifies conditions \hyperref[A1]{$(A_1)$} -- \hyperref[A2]{$(A_2)$}. Furthermore, assume that there exist two sequences $\{a_k\}$ and $\{b_k\}$ in $(0, +\infty)$, such that $a_k < b_k$, $\lim_{k \to \infty} b_k = +\infty$, $\lim_{k \to \infty} \frac{b_k}{a_k} = +\infty$, and
$\max_{[a_k,b_k]} h \leq 0$, for all $k \in \mathbb{N}$.
Then, there exists $\lambda_0>0$ such that for every $\lambda>\lambda_0$,  problem \eqref{eq:D} admits an unbounded sequence of weak solutions in $W_{0,F}^{1,p}(\Omega)$.
\end{theorem}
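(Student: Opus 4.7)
The plan is to realize weak solutions of $(\mathcal{D}_\lambda)$ as critical points of the $C^1$ energy functional $\mathcal{E}_\lambda := \Phi - \lambda \Psi$ on the reflexive Banach space $X := W_{0,F}^{1,p}(\Omega)$, where $\Phi(u) := \frac{1}{p}\int_\Omega F^*(x, Du(x))^p \, \mathrm{d}v_F$ and $\Psi(u) := \int_\Omega H(u(x)) \, \mathrm{d}v_F$, and then invoke a Ricceri-type variational principle exactly in the spirit of Cammaroto et al.\ \cite{Cammaroto}. The Euler--Lagrange identification between critical points of $\mathcal{E}_\lambda$ and weak solutions of $(\mathcal{D}_\lambda)$ follows from the divergence theorem \eqref{Green}.

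The first step is to verify the abstract hypotheses in the Finslerian setting. Reversibility of $F$ together with $p>1$ makes the canonical $W_{0,F}^{1,p}(\Omega)$-norm uniformly convex, so $X$ is a reflexive Banach space. The Morrey--Sobolev inequality of Theorem \ref{theorem-1-intro} extends to $X$ by density and gives the key continuous embedding $X \hookrightarrow L^\infty(\Omega)$ with explicit constant $K := {\sf T}_F^{\sf MS}\, \mathrm{Vol}_F(\Omega)^{1/n-1/p}$; this in turn upgrades to a compact embedding $X \hookrightarrow C(\overline\Omega)$ via the H\"older regularity available for $p>n$. From these ingredients I can read off that $\Phi$ is coercive on $X$, strictly convex, sequentially weakly lower semicontinuous, with a G\^ateaux derivative of type $(S_+)$, while $\Psi$ is sequentially weakly continuous with a completely continuous derivative, as required by the variational principle.

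The core of the argument is an oscillation estimate for
\[
\varphi(r) := \inf_{\Phi(u)<r}\, \frac{\sup_{\Phi(v)<r}\Psi(v)-\Psi(u)}{r-\Phi(u)},
\]
evaluated along the sequence $r_k := (b_k/K)^p / p$, chosen so that $\Phi(v)<r_k$ forces $\|v\|_{L^\infty(\Omega)} \leq b_k$ via the Morrey--Sobolev embedding. Since $\max_{[a_k,b_k]} h \leq 0$ means $H$ is non-increasing on $[a_k,b_k]$, for every test function $v$ with pointwise values in $[a_k,b_k]$ one has $H(v(x))\leq H(a_k)$; combined with hypotheses $(A_1)$--$(A_2)$ and the continuity of $h$ this should yield an upper bound of the form $\sup_{\Phi(v)<r_k}\Psi(v) \leq C a_k^p$. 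A matching lower bound on $\sup\Psi$ will come from a plateau test function $u_k \in X$ equal to $a_k$ on a geodesic ball $B_{x_0}(\rho_k) \Subset \Omega$ and smoothly tapering to zero in a thin collar, whose $F^*$-Dirichlet energy is estimated using the eikonal equation \eqref{tavolsag-derivalt} and Lemma \ref{lemma-layer}. Because $b_k/a_k \to +\infty$, the ratio defining $\varphi(r_k)$ tends to zero, so $\liminf_{r\to+\infty}\varphi(r)=0$, and the Ricceri principle then produces an unbounded sequence of local minima of $\mathcal{E}_\lambda$ for every $\lambda>\lambda_0$ (with the threshold $\lambda_0$ read off from the growth constant in $(A_2)$, cf.\ \cite{Cammaroto}).

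The main obstacle I anticipate is the upper estimate $\sup_{\Phi(v)<r_k}\Psi(v)\leq C a_k^p$: the oscillation encoded in $\max_{[a_k,b_k]} h \leq 0$ must be combined delicately with the growth bound $(A_2)$ and the Morrey--Sobolev pointwise control, so that the \emph{full} range $[-b_k,b_k]$ of values of admissible $v$ is uniformly controlled; here reversibility of $F$ is essential in order to bound $|v|$ symmetrically from the $F^*$-Dirichlet energy. A secondary technical point is the rigorous verification of the $(S_+)$-property of the $p$-Finsler--Laplace operator on $X$, needed to upgrade weakly convergent minimizing sequences on sublevel sets $\{\Phi \leq r_k\}$ to strong limits, but this is already available in the Finslerian literature under the reversibility hypothesis adopted throughout the paper.
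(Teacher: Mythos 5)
Your proposal matches the paper's own argument in all essential respects: both cast weak solutions of $(\mathcal{D}_\lambda)$ as critical points of the energy functional on the reflexive space $W_{0,F}^{1,p}(\Omega)$, both invoke Ricceri's variational principle (the paper's Theorem \ref{Th:CriticalPoint}), both derive the crucial compact embedding into $L^\infty(\Omega)$ from the sharp Morrey--Sobolev inequality of Theorem \ref{theorem-1-intro}, both choose $r_k$ proportional to $(b_k/c)^p$ so that the pointwise bound confines sublevel-set test functions to $|v|\leq b_k$, and both delegate the remaining oscillation estimate $\varphi(r_k)\to 0$ and the unboundedness-from-below of $\mathcal{E}_\lambda$ to the scheme of Cammaroto, Chinn\`i and Di Bella. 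The technical obstacle you flag about controlling $H$ on the full interval $[-b_k,b_k]$ (not just $[a_k,b_k]$) is real but is likewise absorbed into the citation to \cite{Cammaroto} in the paper's proof, so it does not constitute a gap relative to the paper.
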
		

The proof is based on the following critical point result of  Ricceri \cite[Theorem 2.5]{Ricceri00}:

\begin{theorem} \label{Th:CriticalPoint}
Let $(X, \|\cdot\|)$ be a reflexive real Banach space, and let $\Phi, \Psi: X \to \mathbb{R}$ be two sequentially weakly lower semicontinuous and Gâteaux differentiable functionals, such that
$\Psi$ is $($strongly$)$ continuous and satisfies $\lim_{\|x\|\to +\infty}\Psi(x) = +\infty$. For every $r > \inf_X \Psi$, put
\begin{equation}\label{varphi-definicio}
	 \varphi(r) = \inf_{u \in \Psi^{-1}(-\infty, r)} \frac{\Phi(u) - \inf_{v \in \overline{(\Psi^{-1}(-\infty, r))_w}} \Phi(v)}{r - \Psi(u)},
\end{equation}
where $\overline{(\Psi^{-1}(-\infty, r))_w}$ is the closure of $\Psi^{-1}(-\infty, r)$ in the weak topology. Let $\lambda \in \mathbb{R}$ be fixed. If $\{r_k\}$ is a real sequence such that $\lim_{k \to \infty} r_k = +\infty$ and $\varphi(r_k) < \lambda$ for all $k \in \mathbb{N}$, then either $\Phi + \lambda \Psi$ has a global minimum, or there exists a sequence $\{u_k\}$ of critical points of $\Phi + \lambda \Psi$ such that $\lim_{k \to \infty} \Psi(u_k) = +\infty$.
\end{theorem}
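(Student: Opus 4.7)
The plan is to reduce the theorem to a sequence of constrained minimization problems on weakly compact sublevel sets of $\Psi$, and to exploit the hypothesis $\varphi(r_k) < \lambda$ precisely to force each constrained minimizer into the open strict sublevel $\Psi^{-1}(-\infty, r_k)$, where it automatically becomes a critical point of $\Phi + \lambda \Psi$; the dichotomy then follows from a weak-compactness argument that, if the resulting sequence does not escape to infinity along $\Psi$, produces a global minimum as a weak subsequential limit. A first remark is that $\varphi$ takes only nonnegative values (being the infimum of ratios of nonnegative quantities), so the hypothesis forces $\lambda > 0$; in particular $\Phi + \lambda \Psi$ is sequentially weakly lower semicontinuous. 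Since $X$ is reflexive, $\Psi$ is coercive and sequentially weakly lower semicontinuous, each set $\Psi^{-1}(-\infty, r]$ is bounded and weakly closed, hence weakly compact, and $\overline{(\Psi^{-1}(-\infty, r))_w} \subset \Psi^{-1}(-\infty, r]$. Applying the direct method to $\Phi + \lambda \Psi$ on the weakly compact set $\overline{(\Psi^{-1}(-\infty, r_k))_w}$ produces a minimizer $u_k$ of this functional on that set.

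The decisive step is to prove $\Psi(u_k) < r_k$. Unfolding \eqref{varphi-definicio}, the assumption $\varphi(r_k) < \lambda$ supplies some $v \in \Psi^{-1}(-\infty, r_k)$ with
\[
\Phi(v) + \lambda \Psi(v) < \inf_{\overline{(\Psi^{-1}(-\infty, r_k))_w}} \Phi + \lambda r_k.
\]
Combining this with the minimality of $u_k$ and the trivial bound $\inf_{\overline{(\Psi^{-1}(-\infty, r_k))_w}} \Phi \le \Phi(u_k)$ yields $\lambda \Psi(u_k) < \lambda r_k$, so $\Psi(u_k) < r_k$. Since $\Psi$ is strongly continuous, the strict sublevel $\Psi^{-1}(-\infty, r_k)$ is strongly open, so $u_k$ is a local minimizer of $\Phi + \lambda \Psi$, and G\^{a}teaux differentiability promotes it to a critical point of $\Phi + \lambda \Psi$.

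The dichotomy is then immediate. If $\Psi(u_k) \to +\infty$ we are done. Otherwise some subsequence satisfies $\Psi(u_{k_j}) \le C$, and coercivity of $\Psi$ together with reflexivity of $X$ produces a further subsequence $u_{k_j} \rightharpoonup u^*$. For any $v \in X$ one has $v \in \Psi^{-1}(-\infty, r_k)$ for all $k$ sufficiently large (since $r_k \to +\infty$), so $\Phi(u_{k_j}) + \lambda \Psi(u_{k_j}) \le \Phi(v) + \lambda \Psi(v)$ for $j$ large; passing to the limit via joint weak lower semicontinuity of $\Phi$ and $\Psi$ gives $\Phi(u^*) + \lambda \Psi(u^*) \le \Phi(v) + \lambda \Psi(v)$ for every $v \in X$, so $u^*$ is a global minimum. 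I expect the main obstacle to be the interior-minimization step of the second paragraph: translating the somewhat opaque quantity $\varphi(r_k)$ into the concrete geometric statement that the constrained minimizer cannot sit on the level set $\{\Psi = r_k\}$ is what carries the theorem, while everything else is the standard direct-method-plus-weak-compactness machinery in a reflexive space.
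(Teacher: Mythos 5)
The paper states Theorem \ref{Th:CriticalPoint} as a quoted result of Ricceri \cite[Theorem 2.5]{Ricceri00} and does not provide its own proof, so there is no in-paper argument to compare against. Your proof is correct and is, modulo presentation, the classical argument: you correctly observe that $\varphi\geq 0$ forces $\lambda>0$ so that $\Phi+\lambda\Psi$ is s.w.l.s.c.; you minimize $\Phi+\lambda\Psi$ over the weakly compact set $\overline{(\Psi^{-1}(-\infty,r_k))_w}$; and the decisive computation
\[
\Phi(u_k)+\lambda\Psi(u_k)\le \Phi(v)+\lambda\Psi(v)<\inf_{\overline{(\Psi^{-1}(-\infty,r_k))_w}}\Phi+\lambda r_k\le \Phi(u_k)+\lambda r_k
\]
cleanly extracts $\Psi(u_k)<r_k$ from $\varphi(r_k)<\lambda$, placing $u_k$ in the strongly open sublevel so that it is a local, hence critical, point. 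The dichotomy via boundedness of $\Psi(u_{k_j})$, weak compactness, and $r_k\to\infty$ is also sound. The only stylistic gap is that you should state explicitly that $\inf_{\overline{(\Psi^{-1}(-\infty,r))_w}}\Phi$ is finite (attained, by weak compactness of the constraint set and s.w.l.s.c.\ of $\Phi$), which is what makes $\varphi(r)$ well defined and the chain of inequalities meaningful; but this is implicit in your setup.
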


\textit{Proof of Theorem \ref{Appl:1}.}
We shall apply Theorem \ref{Th:CriticalPoint} by choosing $X = W_{0,F}^{1,p}(\Omega)$ endowed with the norm 
$$\|u\| = \left(\int_\Omega F^*(x, Du(x))^p \mathrm{d}v_F \right)^{1/p}.$$
As $\Omega$ is bounded, by Theorem \ref{theorem-1-intro} it follows that there exists a constant $c>0$ such that for any  $u \in  W_{0,F}^{1,p}(\Omega)$, 
\begin{equation}\label{pointwise}
	\sup_{x \in \Omega} |u(x)| \leq c \|u\|. 
\end{equation} Moreover, the embedding 
\begin{equation} \label{eq:embedding}
W_{0,F}^{1,p}(\Omega) \hookrightarrow L^\infty(\Omega)
\end{equation}
is compact, which follows from  Hebey \cite{Hebey} and the equivalence of the Finsler metric $F$ to any complete Riemannian metric on the bounded $\Omega$.  

Let $\Phi,\Psi:W_{0,F}^{1,p}(\Omega)\to \mathbb R$ be defined by 
$$\Phi(u) = -\int_\Omega H(u(x)) \mathrm{d}v_F 
\quad  \text{and} \quad
\Psi(u) = \int_\Omega F^*(x, Du(x))^p \mathrm{d}v_F, $$
and for $\lambda>0 $ we consider the energy functional associated with problem \eqref{eq:D} as 
$$ \mathcal{E}_\lambda: W_{0,F}^{1,p}(\Omega) \to \mathbb R, \quad \mathcal{E}_\lambda(u) = \lambda \Phi(u) + \frac{1}{p} \Psi(u). $$
Then, the critical points of $\mathcal E_\lambda$ are precisely
the weak solutions of problem \eqref{eq:D}. Standard arguments based on the compact
embedding \eqref{eq:embedding} imply that the functionals $\Phi$ and $\Psi$ are sequentially weakly lower semicontinuous and Gâteaux differentiable. Also, as a norm-type function, $\Psi$  is (strongly) continuous and coercive.
Furthermore, for each $r > 0$, the function $\varphi$ in \eqref{varphi-definicio} takes the form
$$ \varphi(r) = \inf_{\|u\|^p < r} \frac{\displaystyle\sup_{\|v\|^p \leq r} \displaystyle\int_\Omega H(v(x)) \mathrm{d}v_F - \displaystyle\int_\Omega H(u(x)) \mathrm{d}v_F }{r - \|u\|^p}.$$
Following the arguments of Cammaroto, Chinnì and Di Bella \cite[Theorem 1.1]{Cammaroto}, by taking $r_k = \left(\frac{b_k}{c}\right)^p$ (where $c>0$ comes from \eqref{pointwise}), it can be shown that $\varphi(r_k) < \frac{1}{p}$ for every  $k \in \mathbb{N}$, where the crucial step is the point-wise estimate  \eqref{pointwise}. 
Finally, one can prove that there exists $\lambda_0>0$ such that for every $\lambda>\lambda_0$, the functional $\mathcal{E}_\lambda$ is not bounded from below in $W_{0,F}^{1,p}(\Omega)$. Therefore,  Theorem \ref{Th:CriticalPoint} yields the existence of a sequence $\{u_k\} \subset W_{0,F}^{1,p}(\Omega)$ of critical points of $\mathcal{E}_\lambda$ such that $\lim_{k \to \infty} \|u_k\| = +\infty$.  \hfill $\square$

\subsection{Existence of a nonzero solution for a Dirichlet problem involving a singular term.}

As an application of Theorem \ref{Th:BrezisPoincareVazquez}, we consider on $(M, F)$ the following
semilinear Dirichlet problem 
\begin{align}\label{eq:P}
\begin{cases}
-\Delta_F u(x) - \mu \displaystyle \frac{u(x)}{d_F(x_0, x)^2} + \lambda u(x) = \displaystyle|u(x)|^{p-2} u(x) , \quad x \in \Omega \\
u \geq  0, ~ u \in W_{0,F}^{1,2}(\Omega) ,
\end{cases}
\tag{$\mathcal{P}_{\mu, \lambda}$}
\end{align}
where $\Delta_F$ is the $2$-Finsler-Laplace operator on $(M, F)$, $\Omega \subset M$ is a bounded, open set with $C^1$ boundary, and $x_0 \in \Omega$ is arbitrarily fixed. In addition, suppose that $p \in (2,2^*)$, $2^*$ being the critical
Sobolev exponent, i.e., $2^* =  2n/(n-2)$ if $n \geq 3$ and $2^* =  +\infty$ if $n = 2$.

If $\mu$ and $\lambda \in \mathbb{R}$ belong to a suitable range of parameters, one can show the existence of a nonzero
solution of problem \eqref{eq:P}, namely:  

\begin{theorem} \label{Application2}
	Let $(M, F)$ be a noncompact, complete $n$-dimensional reversible Finsler manifold with  $\mathsf{Ric}_n \geq 0$, $0 < \mathsf{AVR}_F \leq 1$, and $n \geq 2$. Let $\Omega\subset M$ be a bounded open set with $C^1$ boundary, $x_0 \in \Omega$ and $p \in (2,2^*)$. If either $\mu = 0$ when $n=2$, or $\mu \in \left[0, \frac{(n-2)^2}{4}\mathsf{AVR}_F^\frac{2}{n}\right)$ when $n \geq 3$, and $\lambda > - S_{\mu,F}(\Omega)$,  $S_{\mu,F}(\Omega)$ being the constant given by \eqref{S_mu_F}, then problem \eqref{eq:P} has a nontrivial and  nonnegative weak solution. 
	
	\begin{proof}
	Suppose that $\mu = 0$ when $n=2$, or $\mu \in \left[0, \frac{(n-2)^2}{4}\mathsf{AVR}_F ^\frac{2}{n}\right)$ when $n \geq 3$. Let $\lambda > - S_{\mu,F}(\Omega)$, and let us define the number $c_{\mu, \lambda} \in (0,1]$ by
	\begin{equation*}
	c_{\mu, \lambda} \coloneqq 
	\begin{cases} 
	\min\left(1, 1+\displaystyle\frac{\lambda}{S_{\mu,F}(\Omega)}\right), & \text{if } n = 2 \\
	\displaystyle\frac{4}{(n-2)^2} \overline \mu^2 \min\left(1, 1+\displaystyle\frac{\lambda}{S_{\mu,F}(\Omega)}\right), & \text{if } n \geq 3  
	\end{cases},
	\end{equation*}
	where $S_{\mu,F}(\Omega)$ and $\overline \mu$ are given by \eqref{S_mu_F}.	
	
	Then, using the \textit{Hardy} inequality \eqref{Hardy-intro} and the \textit{Brezis-Poincar\'e-V\'azquez} inequality \eqref{BVP-1}, it turns out that for every $ u \in W_{0,F}^{1,2}(\Omega)$, we have  
	\begin{equation*}
	\mathcal{K}^2_{\mu, \lambda}(u) \coloneqq \int_{\Omega} \Big\{ F^*(x, D u(x))^2  - \mu  \frac{u(x)^2}{d_F(x_0, x)^2} + \lambda u(x)^2 \Big\} \mathrm{d}v_F  
	\geq c_{\mu, \lambda} \int_{\Omega} F^*(x, D u(x))^2 \mathrm{d}v_F.
	\end{equation*}
		Therefore,  the functional $u \mapsto \mathcal{K}_{\mu,\lambda}(u)$ defines a norm on $W_{0,F}^{1,2}(\Omega)$, which is equivalent to the usual Dirichlet-norm $\|\cdot\|_{D_0^1}$.
	
	We associate with problem \eqref{eq:P} its energy functional $\mathcal{E}_{\mu,\lambda}: W_{0,F}^{1,2}(\Omega) \to \mathbb{R}$ defined by 
	\begin{equation*}
	\mathcal{E}_{\mu,\lambda}(u) \coloneqq \frac{1}{2} \mathcal{K}^2_{\mu,\lambda}(u) - \int_\Omega G(u(x))  \mathrm{d}v_F,
	\end{equation*}
	where $G: \mathbb{R} \to [0,\infty)$, $G(s) = \frac{s_+^p}{p}$ and $s_+ = \max(0,s)$.	 
	In a standard manner one can prove that $\mathcal{E}_{\mu,\lambda} \in C^1(W_{0,F}^{1,2}(\Omega);\mathbb{R})$. Moreover, $\mathcal{E}_{\mu,\lambda}$ verifies the conditions of the mountain pass theorem. Indeed, for any $p \in(2,2^*)$, by the Sobolev embedding theorem the continuous embedding $W_{0,F}^{1,2}(\Omega) \subset L^p(\Omega)$ holds, thus there exists a constant $c_1 >0$ such that $\|u\|_{L^p} \leq c_1 \|u\|_{W^{1,2}_0}$ for every $u\in W_{0,F}^{1,2}(\Omega)$. Furthermore, as $\Omega$ is a bounded domain, the norms $\|\cdot\|_{W^{1,2}_0}$ and $\|\cdot\|_{D^1_0}$ are equivalent, therefore we obtain that there exists a constant $c_2 = c_2(c_1,p) > 0$ such that	
	$$\mathcal{E}_{\mu,\lambda}(u) \geq \frac{1}{2} \mathcal{K}^2_{\mu,\lambda}(u)  -  \frac{c_1^p}{p} \|u\|^p_{W^{1,2}_0} \geq \mathcal{K}^2_{\mu,\lambda}(u) \left(\frac{1}{2} - c_2 \cdot \mathcal{K}^{p-2}_{\mu,\lambda}(u)\right), $$
	thus there exists a sufficiently small 
	$\rho >0$ such that 
	$$\inf_{\mathcal{K}_{\mu,\lambda}(u) = \rho} \mathcal{E}_{\mu,\lambda}(u) > 0 = \mathcal{E}_{\mu,\lambda}(0) .$$
	
	Furthermore, for any $t > 0$ and $u \in W_{0,F}^{1,2}(\Omega)$ with $u \geq 0$, we have $$\mathcal{E}_{\mu,\lambda}(t u) = \frac{t^2}{2} \mathcal{K}^2_{\mu,\lambda}(u) - t^p \int_\Omega G(u(x))  \mathrm{d}v_F \leq \frac{t^2}{2} \int_{\Omega} F^*(x, D u(x) )^2 \mathrm{d}v_F  - \frac{t^p}{p} \int_\Omega u(x)^p \mathrm{d}v_F ,$$
	thus there exists a sufficiently large $t>0$ and $\overline{u} \in W_{0,F}^{1,2}(\Omega)\setminus \{0\}$, $\overline{u} \geq 0$ such that $\mathcal{K}_{\mu,\lambda}(t\overline{u}) > \rho$ and  $\mathcal{E}_{\mu,\lambda}(t \overline{u}) \leq 0 $.
	
	Since $W_{0,F}^{1,2}(\Omega)$ is compactly embedded into $L^p(\Omega)$ for any $p \in [2,2^*)$, it can be proven that $\mathcal{E}_{\mu,\lambda}$ satisfies the Palais-Smale condition at each level. Therefore, by the Ambrosetti-Rabinowitz theorem (see Willem \cite[Lemma 1.20]{Willem}), it follows that $\mathcal{E}_{\mu,\lambda}$ has a positive critical value corresponding to a nontrivial weak solution $u \in W_{0,F}^{1,2}(\Omega)$ of the problem 
$$
	\begin{cases}
	-\Delta_F u(x) - \mu \displaystyle\frac{u(x)}{d_F(x_0, x)^2} + \lambda u(x) = u(x)_+^{p-1} , \quad x \in \Omega \\
	u \in W_{0,F}^{1,2}(\Omega) .
	\end{cases}
$$
	Multiplying the first equation by $u_-(x) = \min(0,u(x))\in  W_{0,F}^{1,2}(\Omega)$ and integrating over $\Omega$, we obtain that $\mathcal{K}^2_{\mu,\lambda}(u_-) = 0$, which in turn yields that $u_- = 0$. Thus $u \geq 0$ is a nontrivial solution to problem \eqref{eq:P}. 
	\end{proof}
\end{theorem}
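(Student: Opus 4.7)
The plan is to solve problem \eqref{eq:P} variationally via the mountain pass theorem. First I would introduce the functional
$$\mathcal{K}^2_{\mu,\lambda}(u)=\int_\Omega F^*(x,Du(x))^2\,\mathrm{d}v_F - \mu\int_\Omega \frac{u(x)^2}{d_F(x_0,x)^2}\,\mathrm{d}v_F + \lambda\int_\Omega u(x)^2\,\mathrm{d}v_F$$
and verify that it induces a norm on $W^{1,2}_{0,F}(\Omega)$ equivalent to $\|\cdot\|_{W^{1,2}_F}$. When $\mu=0$ (necessary for $n=2$) and $\lambda \geq 0$, this is obvious. For $\mu\in\bigl[0,\tfrac{(n-2)^2}{4}\mathsf{AVR}_F^{2/n}\bigr)$ with $n\geq 3$, the Hardy-Sobolev inequality of Theorem \ref{theorem-Hardy} controls the singular term and leaves a positive multiple of the Dirichlet energy; if in addition $\lambda<0$, the Brezis-Poincar\'e-V\'azquez inequality \eqref{BVP-1} furnishes a lower bound by $S_{\mu,F}(\Omega)\int_\Omega u^2\,\mathrm{d}v_F$, so the hypothesis $\lambda>-S_{\mu,F}(\Omega)$ preserves coercivity. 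Combining these two estimates gives a constant $c_{\mu,\lambda}>0$ with $\mathcal{K}^2_{\mu,\lambda}(u)\geq c_{\mu,\lambda}\int_\Omega F^*(x,Du(x))^2\,\mathrm{d}v_F$.

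Next, I would introduce the $C^1$ energy functional
$$\mathcal{E}_{\mu,\lambda}(u)=\frac{1}{2}\mathcal{K}^2_{\mu,\lambda}(u)-\frac{1}{p}\int_\Omega u_+(x)^p\,\mathrm{d}v_F,$$
whose critical points are precisely the weak solutions of a problem that reduces to \eqref{eq:P} once nonnegativity is shown. To verify the mountain pass geometry, I would invoke the compact Sobolev embedding $W^{1,2}_{0,F}(\Omega)\hookrightarrow L^p(\Omega)$, valid for every subcritical $p\in(2,2^*)$ on the bounded set $\Omega$ (since the Finsler metric is equivalent there to any complete Riemannian metric, so Hebey's framework applies). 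The $L^p$-bound yields a constant $c_1>0$ with $\|u\|_{L^p}\leq c_1\mathcal{K}_{\mu,\lambda}(u)$, hence
$$\mathcal{E}_{\mu,\lambda}(u)\geq \tfrac{1}{2}\mathcal{K}^2_{\mu,\lambda}(u)-\tfrac{c_1^p}{p}\mathcal{K}^p_{\mu,\lambda}(u),$$
which is bounded below by a positive constant on a small sphere $\{\mathcal{K}_{\mu,\lambda}(u)=\rho\}$ because $p>2$. The second geometric condition follows by picking any nonnegative $\bar u\neq 0$ and observing that $\mathcal{E}_{\mu,\lambda}(t\bar u)\sim \tfrac{t^2}{2}\mathcal{K}^2_{\mu,\lambda}(\bar u)-\tfrac{t^p}{p}\|\bar u\|_{L^p}^p\to -\infty$ as $t\to\infty$.

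Subsequently I would establish the Palais-Smale condition at every level: any PS sequence is bounded by coercivity of $\mathcal{K}_{\mu,\lambda}$ combined with the Ambrosetti-Rabinowitz-type inequality coming from the pure-power nonlinearity; weak convergence in $W^{1,2}_{0,F}(\Omega)$ then improves to strong convergence via the compact embedding into $L^p(\Omega)$. Invoking the classical mountain pass theorem (as stated e.g.\ in Willem's monograph) yields a nontrivial critical point $u\in W^{1,2}_{0,F}(\Omega)$ of $\mathcal{E}_{\mu,\lambda}$. To conclude that $u\geq 0$, I would test the Euler-Lagrange equation with the admissible function $u_-=\min(0,u)$; the truncated divergence identity gives $\mathcal{K}^2_{\mu,\lambda}(u_-)=0$, forcing $u_-\equiv 0$.

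The main obstacle I anticipate is precisely the step of showing that $\mathcal{K}_{\mu,\lambda}$ defines an equivalent norm on $W^{1,2}_{0,F}(\Omega)$ when the negative contributions from $\mu$ and $\lambda$ act simultaneously, since one must use \emph{both} the Hardy inequality (to absorb the singular Hardy term up to the threshold $\tfrac{(n-2)^2}{4}\mathsf{AVR}_F^{2/n}$) and the Brezis-Poincar\'e-V\'azquez inequality with sharp constant $S_{\mu,F}(\Omega)$ from Theorem \ref{Th:BrezisPoincareVazquez} to absorb negative $\lambda$, \emph{without} completely exhausting the Dirichlet energy; the bookkeeping requires the explicit $c_{\mu,\lambda}\in(0,1]$ produced above. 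Everything else is standard variational machinery.
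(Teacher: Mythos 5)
Your proposal is correct and follows essentially the same route as the paper: establish that $\mathcal{K}_{\mu,\lambda}$ is a norm equivalent to the Dirichlet norm via Theorems \ref{theorem-Hardy} and \ref{Th:BrezisPoincareVazquez}, set up the energy functional with the truncated nonlinearity $G(s)=s_+^p/p$, verify the mountain pass geometry and Palais--Smale condition using the compact embedding $W^{1,2}_{0,F}(\Omega)\hookrightarrow L^p(\Omega)$, and recover nonnegativity of the solution by testing against $u_-$. The obstacle you flag (balancing the Hardy and Brezis--Poincar\'e--V\'azquez absorptions without exhausting the Dirichlet energy) is exactly what the paper's explicit constant $c_{\mu,\lambda}$ handles.
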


\vspace{0.5cm}

\end{document}